\renewcommand{\d}{\mathrm d}
\newcommand{\R}{\mathbb R}
\newcommand{\wt}{\widetilde}
\renewcommand{\Re}{\operatorname{Re}}
\renewcommand{\Im}{\operatorname{Im}}
\newcommand{\Ai}{\operatorname{Ai}}
\newcommand{\Bi}{\operatorname{Bi}}
\newcommand{\Hi}{\operatorname{Hi}}
\newcommand{\A}{\mathcal A}
\renewcommand{\O}{\mathcal{O}}
\renewcommand{\P}{\mathbf P}
\newcommand{\E}{\mathbf E}
\newcommand{\I}{\mathrm i}
\newtheorem{proposition}{Proposition}[section]
\newtheorem{theorem}[proposition]{Theorem}
\newtheorem{lemma}[proposition]{Lemma}
\theoremstyle{definition}
\numberwithin{equation}{section}
\author{Patrik L.\ Ferrari\thanks{Institute for Applied Mathematics, Bonn University, Endenicher Allee 60, 53115 Bonn,
Germany. E-mail: {\tt ferrari@uni-bonn.de}} \and
B\'alint Vet\H o\thanks{Department of Stochastics, Budapest University of Technology and Economics;
MTA\,--\,BME Stochastics Research Group, Egry J.\ u.\ 1, 1111 Budapest, Hungary. E-mail: {\tt vetob@math.bme.hu}}}
\title{Upper tail decay of KPZ models\\ with Brownian initial conditions}
\date{July 24, 2020}
\begin{document}

\maketitle
\begin{abstract}
In this paper we consider the limiting distribution of KPZ growth models with random but not stationary initial conditions introduced in~\cite{CFS16}. The one-point distribution of the limit is given in terms of a variational problem. By directly studying it, we deduce the right tail asymptotic of the distribution function. This gives a rigorous proof and extends the results obtained by Meerson and Schmidt in~\cite{MS17}.
%Key words and phrases: KPZ growth models, random initial conditions, right tail asymptotic.
%MSC classes: 60K35, 60F10
\end{abstract}

\sloppy

\section{Introduction}
The Kardar--Parisi--Zhang (KPZ) universality class of stochastic growth models in one dimension are described by a stochastically growing interface parameterized by a height function.
For general initial conditions, the one-point distribution of the large time limit can be written in terms of a variational problem.
The ingredients are the (scaled) initial condition and the Airy$_2$ process, $\A_2$.
The latter arises as the limiting interface process when the macroscopic geometry of the interface in the law of large numbers is curved.
It was discovered in the work of Pr\"ahofer and Spohn~\cite{PS02} and described by its finite-dimensional distribution.
Soon after, Johansson showed weak convergence of the discrete polynuclear growth model to the Airy$_2$ process~\cite{Jo03b}.
In the same paper, a first variational formula appeared (see Corollary~1.3 of~\cite{Jo03b})
\begin{equation}\label{eq1}
F_{\rm GOE}(2^{2/3}s)=\P\left(\sup_{t\in\R}(\A_2(t)-t^2)\leq s\right)
\end{equation}
where $\A_2$ is the Airy$_2$ process and $F_{\rm GOE}$ is the GOE Tracy--Widom distribution function discovered in random matrix theory~\cite{TW96}.
Formula~\eqref{eq1} corresponds to the flat initial condition as $F_{\rm GOE}$ is the limiting distribution of the corresponding rescaled interface.

Later, variational formulas describing the one-point distributions for some special initial conditions appeared in several papers, see for instance~\cite{QR13B,BL13,QR13,QR16}.
The first study of a large class of initial conditions, including random initial conditions, is the paper of Corwin, Liu and Wang~\cite{CLW16}.
In a last passage percolation model they showed the convergence of the one-point distribution to a probability distribution expressed by the variational formula
\begin{equation}\label{eq2}
\P\left(\sup_{t\in\R}\left\{h_0(t)+\A_2(t)-t^2\right\}\le s\right)
\end{equation}
where $h_0$ is the scaling limit of the initial height profile.
Shortly after, Remenik and Quastel in~\cite{QR16} asked and answered the question how much discrepancy from the perfectly flat initial condition would be allowed to still see the GOE Tracy--Widom distribution for the KPZ equation. In their paper the variational representation plays an important role.
The variational formula approach is proved to be useful since it allows one to go beyond the use of exact formulas and to show, for instance, universal limiting distribution for a flat but tilted profile~\cite{FO17}.

Building on~\cite{CLW16}, Chhita, Ferrari and Spohn derived a variational formula which describes the limiting distribution for random initial conditions which scale to a Brownian motion with the result~\cite{CFS16}
\begin{equation}\label{defFsigma}
F^{(\sigma)}(s)=\P\left(\sup_{t\in\R}\left\{\sqrt2\sigma B(t)+\A_2(t)-t^2\right\}\le s\right)
\end{equation}
where $B$ is a standard two-sided Brownian motion independent of the Airy$_2$ process $\A_2$.
This distribution has two special cases which could be analyzed using exact formulas, namely $\sigma=0$ is the flat case and it reduces to \eqref{eq1} whereas $\sigma=1$ corresponds to the stationary initial condition for the model, so that $F^{(1)}(s)$ is the Baik--Rains distribution~\cite{BR00}.

The characterization through a variational formula is tightly related to the question of universality.
In the framework of this paper, the key universal ingredient is the Airy$_2$ process which is a projection of a more general space-time random process. The study of this process started with the discovery of the KPZ fixed point by Matetski, Quastel and Remenik~\cite{MQR17}, for further properties see~\cite{Pim17,Pim19,CHH19}, and continued with the desription of the full space-time process called the Airy sheet or also directed landscape by Dauvergne, Ortmann and Vir{\'a}g~\cite{DOV18}, see also~\cite{NQR20}.

Deducing concrete information from a variational formula is however not always an easy task.
For example, given \eqref{defFsigma}, it is not clear what are the tails of the distribution.
They have only been known for a long time in the cases $\sigma=0$ and $\sigma=1$, because these distributions had other representations, see e.g.~\cite{PS00}.
Meerson and Schmidt considered the $F^{(\sigma)}$ distribution in~\cite{MS17} and they deduced the correct right tail behavior by a physically motivated but non-rigorous method.
They found that $\ln(1-F^{(\sigma)}(s))\sim -\frac43\frac1{\sqrt{1+3\sigma^4}}s^{3/2}$ for $s\gg 1$. They also performed large scale simulations on the exclusion process confirming their finding.

In this paper we give a rigorous proof of the asymptotics and extend the results of~\cite{MS17} by obtaining upper and lower bound on the prefactor in front of the stretched exponential decay, see Theorem~\ref{thm:Fsigmatail}.
The upper tail distribution is governed by the maximal value of $\sqrt2\sigma B(t)-t^2$. This fact holds already for non-random initial conditions.
For instance, in the case of \eqref{eq1}, the tail behaviour of $1-F_{\rm GOE}(2^{2/3}s)$ matches that of $1-F_{\rm GUE}(s)$ up to the exponential scale as the maximal value of $-t^2$ is obtained at $t=0$. The same was shown for another simple function $h_0$ in \eqref{eq2} as noticed in~\cite{Vai20}.
To make this point explicit, Theorem~\ref{thm:generalcurve} gives the tail decay for a generic non-random initial condition.
One important ingredient for the proof of Theorems~\ref{thm:Fsigmatail} and~\ref{thm:generalcurve} is the observation that, for all $\delta>0$, the tail distribution of
\begin{equation}
\P\left(\sup_{t\in\R}\left(\A_2(t)-\delta t^2\right)>s\right)
\end{equation}
is, in the exponential scale, independent of $\delta$, see Theorem~\ref{thm:Airyparabola} for a detailed statement.

For a given realization of the Brownian motion, if the supremum in \eqref{eq1} is taken over a finite interval instead of $\R$, the distribution we are considering also has a Fredholm determinant expression with a kernel depending on the Brownian motion~\cite{CQR11}.
This representation is however not directly applicable when taking the limit as the finite interval approaches $\R$.
For this purpose, it is better to use the kernel given in terms of hitting times noticed first in~\cite{QR16}.
That representation works well provided that the function $h_0$ is lower than a parabola with a prefactor $3/4$, while in our application we need to get close to $1$.
The explicit kernel representation has some intrinsic technical issues as confirmed also in the simplest case of a hat-shaped $h_0$~\cite{Vai20}.
Our method is mainly probabilistic and avoids the computation of a correlation kernel and its asymptotic analysis.

The second issue that we had to deal with was that the density of the maximum of a Brownian motion with parabolic drift studied first by Groeneboom~\cite{Gro89}, see also~\cite{Gro10} for explicit formulas, contains a term with a linear combination of Airy $\Ai$ and Airy $\Bi$ functions.
The leading term is however coming from a subtle cancellation and it does not follow from the naive asymptotic of the Airy functions.
Fortunately, we could avoid this issue by using an integral representation discovered by Janson, Louchard and Martin-L\"of in~\cite{JLML10}, which we carefully analyzed asymptotically, see Proposition~\ref{prop:Groeneboom}.

The paper is organized as follows.
We state the main results in the rest of the introduction.
We first prove Theorem~\ref{thm:Airyparabola} on the upper tail of the supremum of the Airy$_2$ process minus a parabola with arbitrary coefficient in Section~\ref{s:airy}.
Then Section~\ref{s:groeneboom} is about the asymptotic of the supremum of the Brownian motion minus a parabola.
Section~\ref{s:Fsigmatail} proves Theorem~\ref{thm:Fsigmatail} on the right tail of the limiting distribution $F^{(\sigma)}$ for Brownian initial conditions.
The proof of Theorem~\ref{thm:generalcurve} about the case of general deterministic initial conditions is given in Section~\ref{s:deterministic}.

\paragraph{Acknowledgments.} The work of P.L.~Ferrari was partly funded by the Deutsche Forschungsgemeinschaft (DFG, German Research Foundation) under Germany’s Excellence Strategy -- GZ 2047/1, projekt-id 390685813 and by the Deutsche Forschungsgemeinschaft (DFG, German Research Foundation) -- Projektnummer 211504053 -- SFB 1060.
The work of B.~Vet\H o was supported by the NKFI (National Research, Development and Innovation Office) grants PD123994 and FK123962, by the Bolyai Research Scholarship of the Hungarian Academy of Sciences and by the \'UNKP--20--5 New National Excellence Program of the Ministry for Innovation and Technology from the source of the National Research, Development and Innovation Fund.

\paragraph{Main results}

\begin{theorem}\label{thm:Fsigmatail}
Let $\sigma>0$ be fixed. For $s$ large enough, the right tail of $F^{(\sigma)}(s)$ satisfies
\begin{equation}\label{Fsigmatail}
C_1\, s^{-3/4} e^{-\frac43\frac1{\sqrt{1+3\sigma^4}}s^{3/2}}\leq 1-F^{(\sigma)}(s)\leq C_2 \, s^{3/4} \ln(s)\, e^{-\frac43\frac1{\sqrt{1+3\sigma^4}}s^{3/2}}
\end{equation}
for some constants $C_1,C_2$ independent of $s$.
\end{theorem}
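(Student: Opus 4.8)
The plan is to match the tail of $F^{(\sigma)}$ with the tail of the supremum of the deterministic parabolic drift plus a Brownian perturbation, then reduce the problem to two one-dimensional random ingredients: the max of Brownian motion minus a parabola (Groeneboom's density, analyzed asymptotically in Proposition~\ref{prop:Groeneboom}) and the max of the Airy$_2$ process minus a parabola (Theorem~\ref{thm:Airyparabola}). First I would rewrite $\sqrt2\sigma B(t)+\A_2(t)-t^2 = \bigl(\sqrt2\sigma B(t) - \alpha t^2\bigr) + \bigl(\A_2(t) - (1-\alpha)t^2\bigr)$ for a parameter $\alpha\in(0,1)$ to be optimized. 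The two bracketed processes are independent. For the upper bound, condition on the Brownian motion: writing $M_\alpha := \sup_t(\sqrt2\sigma B(t)-\alpha t^2)$ and $N_\beta := \sup_t(\A_2(t)-\beta t^2)$ with $\beta=1-\alpha$, the event $\{\sup_t(\dots)>s\}$ is contained in $\{M_\alpha + N_\beta > s\}$ only after a small argument controlling the \emph{location} of the near-maxima — since $M_\alpha$ and $N_\beta$ are attained at possibly different points $t$, one needs that with overwhelming probability both suprema are attained in a window of size $O(s^{1/2})$ (parabolic scaling), and on that window the sum of the two processes is bounded by $M_\alpha + N_\beta$ up to a lower-order correction. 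I would make this precise by a union bound over dyadic spatial blocks using the curvature of the parabolas to kill far-away contributions, exactly as in the proof of Theorem~\ref{thm:Airyparabola}.

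Given that reduction, the upper tail of $M_\alpha + N_\beta$ is a convolution: $\P(M_\alpha+N_\beta>s) = \int \P(N_\beta > s-x)\, f_{M_\alpha}(x)\,\d x$. By Brownian scaling $M_\alpha \stackrel{d}{=} (\sqrt2\sigma)^{4/3}\alpha^{-1/3} M_1^{(G)}$ where $M_1^{(G)}$ is the standard Groeneboom variable $\sup_t(B(t)-t^2)$; Proposition~\ref{prop:Groeneboom} gives $\P(M_1^{(G)}>u)$ and its density with the sharp constant $\tfrac{4}{3}$ in the exponent and the correct polynomial prefactor. Theorem~\ref{thm:Airyparabola} gives $\P(N_\beta>v)\sim$ (polynomial)$\cdot e^{-\frac43 v^{3/2}}$ with the $\delta$-independent exponential rate. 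Plugging both stretched-exponential asymptotics into the convolution and applying Laplace's method in $x$, the integrand is maximized at the split $x^\ast/s, (s-x^\ast)/s$ where the two rate functions balance; a short computation shows the combined rate is $\frac43 s^{3/2}$ divided by $\sqrt{1+ c(\alpha,\sigma)}$, and optimizing over $\alpha$ yields precisely $\frac{1}{\sqrt{1+3\sigma^4}}$. The Gaussian fluctuation around the saddle produces the extra $s^{-1/4}$, and combined with the polynomial prefactors of the two densities gives the stated powers $s^{-3/4}$ (lower) and $s^{3/4}\ln s$ (upper); the logarithm is the price of the dyadic union bound in the localization step.

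For the lower bound the strategy is cheaper: restrict the supremum to a single favorable configuration. Fix the optimal $\alpha$ and the saddle split, demand that $B$ makes its parabola-corrected excursion reach height close to $x^\ast$ near the optimal location $t^\ast = t^\ast(s)$ (a small-ball/large-deviation event for Brownian motion with explicitly computable probability from Groeneboom's density) and, independently, that $\A_2(t^\ast) - (1-\alpha)(t^\ast)^2$ exceeds $s-x^\ast$; one-point tail bounds for $\A_2$ (or Theorem~\ref{thm:Airyparabola} evaluated at a point) give the matching exponential cost. Multiplying the two probabilities reproduces the exponent $-\frac43\frac{1}{\sqrt{1+3\sigma^4}}s^{3/2}$ with a polynomial loss no worse than $s^{-3/4}$.

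The main obstacle I anticipate is the localization estimate in the upper bound: one must show that replacing the supremum of the \emph{sum} of the two independent processes by the \emph{sum of the suprema} costs only a multiplicative $\ln s$, uniformly in $s$, and that no contribution from $|t|\gg s^{1/2}$ leaks through. This requires the modulus-of-continuity and tail control of $\A_2$ on growing intervals (available via the same blocking argument as Theorem~\ref{thm:Airyparabola}), together with a careful bookkeeping of how the two separate near-maxima can be misaligned — the curvature of the parabolas is what forces them to be $O(s^{1/2})$ apart and makes the cross term lower order. Everything else is Laplace asymptotics on a one-dimensional convolution, for which the two input propositions have been tailored.
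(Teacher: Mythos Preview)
Your overall decomposition --- split off $\sqrt2\sigma B(t)-ct^2$ from $\A_2(t)-(1-c)t^2$, feed in Groeneboom's density and the Airy--parabola tail, then Laplace --- is exactly the paper's route. But there is a genuine inversion in your upper-bound discussion, and you have missed the actual subtle point.

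The inclusion $\bigl\{\sup_t(X(t)+Y(t))>s\bigr\}\subseteq\bigl\{\sup_t X(t)+\sup_t Y(t)>s\bigr\}$ is \emph{trivially true}: if $X(t_0)+Y(t_0)>s$ for some $t_0$ then $\sup X+\sup Y\ge X(t_0)+Y(t_0)>s$. No localization, no alignment of near-maxima, no dyadic blocking is needed; the ``main obstacle'' you anticipate simply is not there. In the paper this step is one line: $\sqrt2\sigma B(t)-ct^2\le M_c$ for every $t$, hence the event lies inside $\{M_c+\sup_t(\A_2(t)-(1-c)t^2)>s\}$.

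The real issue, which you do not address, is the choice of the splitting parameter. With any fixed $c=\alpha\in(0,1)$ the convolution of the two stretched exponentials yields, after Laplace, the exponential rate $\tfrac43\tfrac{\sqrt c}{\sqrt{c+3\sigma^4}}\,s^{3/2}$, which is \emph{strictly smaller} than the target $\tfrac43\tfrac{1}{\sqrt{1+3\sigma^4}}\,s^{3/2}$ for every $c<1$. So ``optimizing over $\alpha$'' does not land at an interior point: the optimum is at the boundary $c\to1$, where the parabolic curvature on the Airy side vanishes and the bound of Theorem~\ref{thm:Airyparabola} degenerates through its factor $(1-c)^{-1/2}\ln(\cdot/(1-c))$. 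The paper's resolution is to let $c$ depend on $s$, taking $1-c$ of order $s^{-3/2}$; this is precisely where the $s^{3/4}\ln s$ in the upper bound comes from --- it is the price of Theorem~\ref{thm:Airyparabola} at $1-c\sim s^{-3/2}$, not the cost of any localization.

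Your lower-bound sketch would work at the exponential scale, but the paper's is cleaner: instead of a deterministic $t^\ast$, evaluate at the random argmax $\tau_1$ of $\sqrt2\sigma B(t)-t^2$, use stationarity of $\A_2$ (and its independence from $B$) to get that $\A_2(\tau_1)$ is GUE-distributed independently of $M_1$, and run the same Laplace analysis on $\E\bigl(1-F_{\rm GUE}(s-M_1)\bigr)$.
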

The $s^{-3/4}$ behavior of the prefactor of the lower bound seems to be the correct one. Indeed, for $\sigma=0$ the prefactor is $s^{-3/4}/(4\sqrt{2\pi})$ (see \eqref{eq1.8} below) and for $\sigma=1$ it is given by\footnote{For $\sigma=0$, the prefactor is obtained from equations (1) and (25), (26) of~\cite{BBdF08}, except that in (26) there is a typo, namely $x^{-3/2}$ should be $x^{-3/4}$. It can be also easily obtained from the Fredholm determinant representation of $F_{\rm GOE}$ in~\cite{FS05b}. For $\sigma=1$ the distribution is the Baik--Rains distribution, given in Definition 2 of~\cite{BR00}. The prefactor easily follows using (2.3), (2.6) of\cite{BR00}, as well as (26) of~\cite{BBdF08}.} $s^{-3/4}/\sqrt{\pi}$. In Proposition~\ref{prop:Fsigmalower} and~\ref{prop:Fsigmaupper} we give expressions of the $\sigma$-dependence of $C_1,C_2$.

As it could be expected by the definition of the $F^{(\sigma)}$ distribution \eqref{defFsigma},
the upper tail behaviour of the $F^{(\sigma)}$ distribution is related to tail of the maximum of the Airy$_2$ process minus a parabola.
The variational formula \eqref{eq1} was proved in~\cite{Jo03b}.
In the next result we generalize \cite{Jo03b} in the sense that we compute the tail decay for the supremum with parabola which can have any coefficient between $0$ and $1$.
\begin{theorem}\label{thm:Airyparabola}
Let $(\A_2(t))_{t\in\R}$ denote the Airy$_2$ process.
There is a constant $C>0$ such that for all $c\in(0,1)$
\begin{equation}\label{Airyparabola}
1-F_{\rm GUE}(s)\leq \P\left(\sup_{t\in\R}\left(\A_2(t)-(1-c)t^2\right)>s\right)
\le C\frac{\ln(s/(1-c))}{s^{3/4}\sqrt{1-c}}e^{-\frac43s^{3/2}}
\end{equation}
holds as $s\to\infty$ where the constant $C$ is independent of $s$.
\end{theorem}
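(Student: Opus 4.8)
For the lower bound I would simply evaluate the supremum at $t=0$: since $(1-c)t^2\ge 0$ everywhere, $\sup_{t\in\R}(\A_2(t)-(1-c)t^2)\ge\A_2(0)$, and $\A_2(0)$ has law $F_{\rm GUE}$, so $\P\big(\sup_{t\in\R}(\A_2(t)-(1-c)t^2)>s\big)\ge 1-F_{\rm GUE}(s)$. The content of the theorem is the upper bound, and the plan is a covering argument that reduces the supremum over all of $\R$ to the single-parabola identity \eqref{eq1}. Set $c':=1-c\in(0,1)$.

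Cover $\R$ by the intervals $I_j=[a_j-\rho/2,a_j+\rho/2]$, $a_j=j\rho$, $j\in\Z$, for a small step $\rho=\rho(s)$ fixed below. On $I_j$ I would complete a unit-scale parabola centered at $a_j$: from the identity $\A_2(t)-c't^2=\big(\A_2(t)-(t-a_j)^2\big)+\big((t-a_j)^2-c't^2\big)$, together with $(t-a_j)^2\le\rho^2/4$ and $c't^2\ge p_j:=c'\,(\max(0,|a_j|-\rho/2))^2$ on $I_j$, one gets
\begin{equation*}
\sup_{t\in I_j}\big(\A_2(t)-c't^2\big)\le\sup_{t\in\R}\big(\A_2(t)-(t-a_j)^2\big)+\tfrac{\rho^2}{4}-p_j .
\end{equation*}
By translation invariance of the Airy$_2$ process, $\sup_{t\in\R}(\A_2(t)-(t-a_j)^2)$ has the same law as $\sup_{t\in\R}(\A_2(t)-t^2)$, whose distribution function is $F_{\rm GOE}(2^{2/3}\cdot)$ by \eqref{eq1}. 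Hence, for $s$ large,
\begin{equation*}
\P\Big(\sup_{t\in I_j}\big(\A_2(t)-c't^2\big)>s\Big)\le 1-F_{\rm GOE}\big(2^{2/3}(s+p_j-\tfrac{\rho^2}{4})\big)\le C\,(s+p_j)^{-3/4}\,e^{-\frac43(s+p_j-\rho^2/4)^{3/2}},
\end{equation*}
where the last step uses the classical right-tail asymptotics $1-F_{\rm GOE}(x)=\Theta\big(x^{-3/4}e^{-\frac23 x^{3/2}}\big)$ and absorbs $\rho^2/4$ into $C$ in the prefactor.

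A union bound over $j$, taking $\rho=s^{-1/4}$ so that $\rho^2/4$ is negligible and $p_j$ comparable to $c'a_j^2$, yields $\P\big(\sup_{t\in\R}(\A_2(t)-c't^2)>s\big)\le C\sum_{j\in\Z}(s+c'a_j^2)^{-3/4}e^{-\frac43(s+c'a_j^2)^{3/2}}$, which up to the density factor $\rho^{-1}=s^{1/4}$ is a Riemann sum for $\int_\R (s+c'x^2)^{-3/4}e^{-\frac43(s+c'x^2)^{3/2}}\,dx$. Estimating the exponent by $(s+c'x^2)^{3/2}\ge s^{3/2}+\tfrac32 c'x^2\sqrt s$ and using $\int_\R e^{-2c'\sqrt s\,x^2}\,dx=\sqrt{\pi/(2c'\sqrt s)}$ shows that integral is $O\big((c')^{-1/2}s^{-1}e^{-\frac43 s^{3/2}}\big)$, so the whole sum is $O\big((c')^{-1/2}s^{-3/4}e^{-\frac43 s^{3/2}}\big)$; the $\asymp (c')^{-1/2}$ terms near $j=0$, each of order $s^{-3/4}e^{-\frac43 s^{3/2}}$, are subsumed. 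This is of the asserted form; the logarithmic factor $\ln(s/(1-c))$ in \eqref{Airyparabola} leaves room for the Riemann-sum error and for the region $|x|\gtrsim\sqrt{s/(1-c)}$ where the quadratic lower bound on the exponent degrades.

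The key point, and the step I expect to need the most care, is recognizing that \eqref{eq1} together with translation invariance of $\A_2$ pins down the supremum of $\A_2$ over a \emph{short} interval very tightly: the parabola-completion bound above reads $\P(\sup_{|t-a|\le\rho/2}\A_2(t)>u)\le 1-F_{\rm GOE}(2^{2/3}(u-\rho^2/4))$, which for $\rho\asymp u^{-1/4}$ is $O(u^{-3/4}e^{-\frac43 u^{3/2}})$, exactly the right order on intervals of length $\asymp u^{-1/4}$ (the scale on which $\A_2$ can beat a flattening parabola). Once this is available the remaining work is bookkeeping that is uniform in $c\in(0,1)$, in particular checking that the constant in the right tail of $F_{\rm GOE}$ and in the Riemann-sum comparison near $a_j=0$ carries no hidden $c$-dependence, so that the final constant depends on $c$ only through the displayed $(1-c)^{-1/2}$.
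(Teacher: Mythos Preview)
Your argument is correct and rests on the same core ingredient as the paper---controlling $\sup_I\A_2$ over a short interval $I$ by completing a unit parabola and invoking the identity \eqref{eq1} together with stationarity of $\A_2$; this is exactly the content of the paper's Lemma~\ref{lemma:Airyfinite}. The organisation of the covering, however, is genuinely different. The paper partitions $[0,\infty)$ \emph{geometrically}: $x_0=0$, $x_1=s^{-1/2}$, $x_{k+1}=\gamma x_k$ with $\gamma=1+\tfrac12\sqrt{1-c}$, the ratio being chosen so that the loss $(x_{k+1}-x_k)^2$ equals a fixed fraction $\tfrac14$ of the parabolic gain $(1-c)x_k^2$; the resulting sum $\sum_k\exp(-\beta\gamma^{3k})$ is then bounded by an exponential-integral estimate (Lemma~\ref{lem2.3}), and this is precisely where the factor $\ln(s/(1-c))$ enters. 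Your uniform arithmetic step $\rho=s^{-1/4}$ instead makes each loss $\rho^2/4=\O(s^{-1/2})$ negligible outright, and the convexity bound $(s+c'x^2)^{3/2}\ge s^{3/2}+\tfrac32\sqrt{s}\,c'x^2$ turns the union bound into a Gaussian sum $\sum_{j}e^{-2c'(j-1/2)^2}\le C(c')^{-1/2}$ with $C$ absolute. In fact your route does not need the logarithm at all: both the Riemann-sum comparison near $j=0$ and the tail region $|x|\gtrsim\sqrt{s/(1-c)}$ are already absorbed by that Gaussian bound uniformly in $c$, so you actually obtain the slightly sharper $C(1-c)^{-1/2}s^{-3/4}e^{-\frac43 s^{3/2}}$. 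The paper's geometric mesh uses far fewer intervals (order $(1-c)^{-1/2}\ln(s/(1-c))$ versus your order $s^{3/4}(1-c)^{-1/2}$), but that parsimony is exactly what produces the logarithm; your finer mesh trades more terms for a cleaner sum.
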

This result might be compared to what has been proven for the KPZ equation with narrow wedge initial condition at finite but large time.
In that case, it is known that the decay is exponential with $s^{3/2}$ power, but without a more precise information on the coefficient, see Proposition~4.2 of~\cite{CGH19}.

The lower bound in \eqref{Airyparabola} is obvious by taking $t=0$ instead of the supremum.
Its asymptotic expansion follows from (1) and (25) of~\cite{BBdF08}, namely
\begin{equation}\label{eqGUEtail}
1-F_{\rm GUE}(s)=\frac{1}{16\pi s^{3/2}}e^{-\frac43 s^{3/2}}(1+\O(s^{-3/2}))
\end{equation}
as $s\to\infty$.
We remark that for $c\leq 0$ the upper bound in \eqref{Airyparabola} is trivial as well, since
\begin{equation}\label{eq1.8}
\P\left(\sup_{t\in\R}\left(\A_2(t)-(1-c)t^2\right)>s\right) \leq \P\left(\sup_{t\in\R}\left(\A_2(t)-t^2\right)>s\right)\sim \frac{1}{4\sqrt{2\pi} s^{3/4}}e^{-\frac43 s^{3/2}}
\end{equation}
where we used \eqref{eq1} and the $x\to\infty$ tail asymptotic
\begin{equation}\label{eqGOEtail}
1-F_{\rm GOE}(x)\sim \frac{e^{-\frac23x^{3/2}}}{4\sqrt\pi x^{3/4}}.
\end{equation}

\begin{theorem}\label{thm:generalcurve}
Let $h_0:\R\to\R$ be a function satisfying $h_0(t)\leq A+(1-\varepsilon) t^2$ for all $t\in\R$, for some constants $A\in\R$ and $\varepsilon>0$.
Let $\kappa(h_0)=\sup_{t\in\R} \{h_0(t)-t^2\}$ and let $M>0$ be large enough so that $h_0(t)\leq \kappa(h_0)+(1-\frac\varepsilon2) t^2$ for all $|t|\geq M$.
Then there are positive real constants $C_1$ and $C_2$ which do not depend on the function $h_0$ and $s$, such that for $s$ large enough
\begin{equation}\label{generalcurvebound}
C_1 \frac{e^{-\frac43 (s-\kappa(h_0))^{3/2}}}{(s-\kappa(h_0))^{3/2}}\leq \P\left(\sup_{t\in\R}\left\{h_0(t)+\A_2(t)-t^2\right\}\geq s\right)\leq C_2 M \frac{e^{-\frac43 (s-\kappa(h_0))^{3/2}}}{(s-\kappa(h_0))^{1/4}}.
\end{equation}
\end{theorem}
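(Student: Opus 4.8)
The strategy is to prove the two bounds separately, using in both cases only the one-point marginal of $\A_2$ (which is $F_{\rm GUE}$) and Theorem~\ref{thm:Airyparabola}, the latter applied with a coefficient $c$ tuned to $s$. Write $\kappa=\kappa(h_0)$; since $h_0(t)-t^2\le A-\varepsilon t^2$ we have $\kappa\le A<\infty$, so $s-\kappa\to\infty$ as $s\to\infty$, and we may assume $\varepsilon\le 1$ and $M\ge 1$ (replacing $\varepsilon$ by $\min(\varepsilon,1)$ only weakens the hypotheses). For the \emph{lower bound} I would simply drop the supremum: for any $\eta>0$ pick $t_0$ with $h_0(t_0)-t_0^2>\kappa-\eta$, and since $\A_2(t_0)$ has distribution function $F_{\rm GUE}$,
\[
\P\Big(\sup_{t\in\R}\{h_0(t)+\A_2(t)-t^2\}\ge s\Big)\ \ge\ \P\big(\A_2(t_0)\ge s-h_0(t_0)+t_0^2\big)\ \ge\ 1-F_{\rm GUE}(s-\kappa+\eta).
\]
With $\eta=(s-\kappa)^{-1}$ one has $(s-\kappa+\eta)^{3/2}\le(s-\kappa)^{3/2}+1$ for $s$ large, so \eqref{eqGUEtail} gives the left inequality of \eqref{generalcurvebound} with a universal $C_1$ (if the supremum defining $\kappa$ is attained, one may take $\eta=0$).

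For the \emph{upper bound} I would split $\R=\{|t|\le M\}\cup\{|t|>M\}$ and bound the two contributions to the supremum by a union bound. On $\{|t|>M\}$ the hypothesis gives $h_0(t)-t^2\le\kappa-\tfrac{\varepsilon}{2}t^2$, so this contribution is at most
\[
\P\Big(\sup_{t\in\R}\big(\A_2(t)-\tfrac{\varepsilon}{2}t^2\big)\ge s-\kappa\Big),
\]
which by Theorem~\ref{thm:Airyparabola} (with the fixed value $1-c=\varepsilon/2$) is at most a constant depending on $\varepsilon$ times $\ln(s-\kappa)\,(s-\kappa)^{-3/4}e^{-\frac43(s-\kappa)^{3/2}}$; since $\ln(s-\kappa)(s-\kappa)^{-1/2}\to0$, for $s$ large this is smaller than the claimed right-hand side of \eqref{generalcurvebound}. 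On $\{|t|\le M\}$ we have $h_0(t)-t^2\le\kappa$ for every $t$, so that contribution is at most $\P(\sup_{|t|\le M}\A_2(t)\ge m)$ with $m:=s-\kappa$; covering $[-M,M]$ by $O(M)$ unit intervals and using the stationarity (translation invariance) of $\A_2$ bounds this by $O(M)\cdot\P(\sup_{t\in[0,1]}\A_2(t)\ge m)$.

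The heart of the matter — and the step where I expect the real work — is to bound $\P(\sup_{t\in[0,1]}\A_2(t)\ge m)$ with the \emph{sharp} exponential rate $e^{-\frac43 m^{3/2}}$, since a direct chaining estimate based on the modulus of continuity of $\A_2$ loses the constant and produces an $e^{O(m)}$ error in the exponent. I would instead use Theorem~\ref{thm:Airyparabola} again, crucially exploiting that its constant is uniform in $c$: because $u^2\le1$ on $[0,1]$,
\[
\sup_{u\in[0,1]}\A_2(u)\ \le\ \sup_{u\in\R}\big(\A_2(u)-(1-c)u^2\big)+(1-c),
\]
and with the choice $1-c=m^{-1/2}$ the threshold is shifted by only $m^{-1/2}$, which by $(m-m^{-1/2})^{3/2}\ge m^{3/2}-\tfrac32$ changes the exponent by $O(1)$, while the factor $\sqrt{1-c}=m^{-1/4}$ in the denominator is harmless; thus Theorem~\ref{thm:Airyparabola} yields $\P(\sup_{t\in[0,1]}\A_2(t)\ge m)\le C\,m^{-1/2}\ln m\,e^{-\frac43 m^{3/2}}$. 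A fixed $c$ would be useless here, since then the threshold shift $1-c$ would be of order $1$ and would change the exponent by an unbounded amount $\asymp m^{1/2}$; this is precisely why $c$ must tend to $1$ at rate $\sim m^{-1/2}$. Feeding this back, the $\{|t|\le M\}$ contribution is at most $C\,M\,m^{-1/2}\ln m\,e^{-\frac43 m^{3/2}}$, which for $s$ large is at most half the claimed right-hand side of \eqref{generalcurvebound} since $m^{-1/4}\ln m\to0$; adding the $\{|t|>M\}$ contribution completes the upper bound. The resulting $C_2$ is independent of $h_0$; only the requirement that $s$ be large enough depends on $h_0$, through $\varepsilon$ and $\kappa(h_0)$.
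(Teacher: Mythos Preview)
Your proof is correct and follows the same overall architecture as the paper's: the lower bound by evaluating at a (near-)maximizer of $h_0(t)-t^2$, and the upper bound by splitting $\{|t|\le M\}\cup\{|t|>M\}$ with Theorem~\ref{thm:Airyparabola} handling the outer region. Your treatment of the lower bound is in fact slightly more careful than the paper's, which tacitly assumes the supremum defining $\kappa(h_0)$ is attained.

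The one genuine difference is in the finite-interval estimate. The paper bounds $\P(\sup_{|t|\le M}\A_2(t)\ge m)$ by directly invoking Lemma~\ref{lemma:Airyfinite}, whose second inequality \eqref{Airyfinite2} is obtained by partitioning $[0,a]$ into intervals of length $1/\sqrt{s}$ and using the Johansson variational formula \eqref{eq1} together with the GOE tail; this gives the bound $C'\,a\,s^{-1/4}e^{-\frac43 s^{3/2}}$ in one step. You instead cover $[-M,M]$ by unit intervals and bound $\P(\sup_{[0,1]}\A_2\ge m)$ via a second application of Theorem~\ref{thm:Airyparabola}, letting $1-c=m^{-1/2}$ so that the threshold shift costs only $O(1)$ in the exponent. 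This is a valid and rather elegant observation---it shows that the sharp finite-interval bound can be read off from Theorem~\ref{thm:Airyparabola} alone---but it is circuitous, since Theorem~\ref{thm:Airyparabola} is itself proved in the paper from Lemma~\ref{lemma:Airyfinite}. The paper's direct route via the GOE formula is shorter and yields the slightly cleaner prefactor $m^{-1/4}$ (versus your $m^{-1/2}\ln m$), which is exactly what appears in \eqref{generalcurvebound}; your bound is nominally tighter but then has to be relaxed to match the stated form.
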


\section{Supremum of the Airy$_2$ process minus a parabola}\label{s:airy}

The aim of this section is to prove Theorem~\ref{thm:Airyparabola} about the upper tail behaviour of the Airy$_2$ process minus a parabola with arbitrary coefficient. The first ingredient is a simple bound on the supremum of the Airy$_2$ process over a finite interval.

\begin{lemma}\label{lemma:Airyfinite}
There is a explicit constant $C$ such that for all $a>0$
\begin{equation}\label{Airyfinite}
\P\bigg(\sup_{t\in[0,a]}\A_2(t)>s\bigg)\le C\,\frac{e^{-\frac43(s-a^2)^{3/2}}}{(s-a^2)^{3/4}}
\end{equation}
holds if $s$ is large enough. From this we get the bound
\begin{equation}\label{Airyfinite2}
\P\bigg(\sup_{t\in[0,a]}\A_2(t)>s\bigg)\le C'\,\frac{a}{s^{1/4}}e^{-\frac43 s^{3/2}}
\end{equation}
for large $s$ with some other constant $C'$.
\end{lemma}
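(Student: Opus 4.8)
The plan is to obtain \eqref{Airyfinite} from the known one-point tail of the Airy$_2$ process together with a union bound / chaining argument over the interval $[0,a]$, and then to derive \eqref{Airyfinite2} by a simple optimization of the exponent. Recall that for a fixed $t$, $\A_2(t)$ has the GUE Tracy--Widom distribution, so by \eqref{eqGUEtail} we have $\P(\A_2(t)>s)=\O(s^{-3/2})e^{-\frac43 s^{3/2}}$. To pass from a single point to the supremum over $[0,a]$, first I would discretize: cover $[0,a]$ by $N$ points $t_j=ja/N$ and write
\begin{equation}
\P\bigg(\sup_{t\in[0,a]}\A_2(t)>s\bigg)\le \sum_{j=0}^{N}\P\big(\A_2(t_j)>s-r\big)+\P\bigg(\sup_{j}\ \sup_{t\in[t_j,t_{j+1}]}\big(\A_2(t)-\A_2(t_j)\big)>r\bigg),
\end{equation}
for a cutoff $r>0$ to be chosen. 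The first sum is controlled by $(N+1)$ times the GUE tail at level $s-r$. For the second (oscillation) term I would invoke the well-known modulus-of-continuity / local Hölder-$1/2^-$ bounds for the Airy$_2$ process (e.g.\ the tail estimates on $\A_2(t)-\A_2(0)$ available from \cite{CHH19,Pim17} or from the comparison with Brownian motion), which give that increments over an interval of length $a/N$ exceed $r$ with probability super-exponentially small in $r^2 N/a$, uniformly in the location; summing over $j$ costs a factor $N$.

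The delicate point is the choice of $N$ and $r$ so that the oscillation term is negligible compared to the main term while the prefactor $N$ does not destroy the $s^{-3/4}$ decay. Taking $r$ to be a small power of $s$, say $r\asymp s^{1/4}$ (or even $r\asymp \ln s$), and $N\asymp s^{1/2}$ makes the oscillation probability decay faster than any power of $e^{-s^{3/2}}$, while the main sum is bounded by $s^{1/2}$ times $s^{-3/2}e^{-\frac43(s-r)^{3/2}}$; since $(s-r)^{3/2}=s^{3/2}-\tfrac32 r s^{1/2}+\cdots$ and $r s^{1/2}\to\infty$, the shift in $r$ is harmless on the exponential scale and can be absorbed. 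This yields $\P(\sup_{[0,a]}\A_2>s)\le C s^{-1}e^{-\frac43 s^{3/2}}$ for $a$ in a bounded range; the $a^2$ appearing inside \eqref{Airyfinite} comes from the deterministic parabolic curvature of the mean of $\A_2$ near large $t$—more precisely, one uses the stationarity-type structure and the fact that $\A_2(t)+t^2$ is a stationary-in-law object only up to lower order, together with the bound $\P(\A_2(t)>s)\le \P(\A_2(0)>s-t^2)(1+o(1))$ uniformly for $t\in[0,a]$, to replace $s$ by $s-a^2$ and pick up the stated form. I expect the main obstacle to be making the oscillation estimate \emph{uniform in $a$} (so that the constant $C$ is genuinely independent of $a$) and tracking that the polynomial prefactor one actually gets is $s^{-3/4}$ rather than $s^{-1}$; to get the sharp $s^{-3/4}$ one should not use a crude union bound over $N\asymp s^{1/2}$ equally weighted points but rather a dyadic chaining in which the contribution is dominated by a single effective point, exactly as in the classical derivation of \eqref{eqGOEtail} from the Airy$_2$ variational formula.

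Finally, \eqref{Airyfinite2} follows from \eqref{Airyfinite} by elementary asymptotics: for $a$ possibly growing with $s$ but with $a^2=o(s)$, write $(s-a^2)^{3/2}=s^{3/2}(1-a^2/s)^{3/2}\ge s^{3/2}-\tfrac32 a^2 s^{1/2}$, so $e^{-\frac43(s-a^2)^{3/2}}\le e^{-\frac43 s^{3/2}}e^{2a^2 s^{1/2}}$; this is not yet of the claimed form, so instead one splits according to whether $a\le s^{1/2}/K$ for a large constant $K$ (where \eqref{Airyfinite} with $(s-a^2)^{3/4}\ge c\, s^{3/4}$ directly gives something stronger than \eqref{Airyfinite2}) or $a$ is larger, in which case one covers $[0,a]$ by $\O(a/s^{1/2})$ subintervals of length $\asymp s^{1/2}$, applies \eqref{Airyfinite} on each (with the curvature shift making each piece contribute at most $C s^{-3/4}e^{-\frac43 s^{3/2}}$, since on the interval farthest from $0$ the parabola only helps), and sums the $\O(a/s^{1/2})$ pieces to obtain the factor $a/s^{1/4}\cdot e^{-\frac43 s^{3/2}}$ after absorbing $s^{-3/4}\cdot s^{1/2}=s^{-1/4}$. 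The reindexing via the parabola is what converts the constant-interval bound \eqref{Airyfinite} into the linear-in-$a$ bound \eqref{Airyfinite2}.
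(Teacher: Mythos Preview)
Your approach misses the key idea and contains a conceptual error about the Airy$_2$ process. The Airy$_2$ process is \emph{exactly} stationary: for every fixed $t$, $\A_2(t)$ has the GUE Tracy--Widom law, and there is no ``parabolic curvature of the mean'' to exploit. Your claim that ``$\A_2(t)+t^2$ is a stationary-in-law object only up to lower order'' is backwards, and the inequality $\P(\A_2(t)>s)\le \P(\A_2(0)>s-t^2)(1+o(1))$ is vacuous by stationarity. The paper's proof of \eqref{Airyfinite} is essentially one line and avoids all the oscillation/chaining machinery: subtract the parabola and extend the range,
\[
\P\Big(\sup_{t\in[0,a]}\A_2(t)>s\Big)\le \P\Big(\sup_{t\in[0,a]}(\A_2(t)-t^2)>s-a^2\Big)\le \P\Big(\sup_{t\in\R}(\A_2(t)-t^2)>s-a^2\Big)=1-F_{\rm GOE}\big(2^{2/3}(s-a^2)\big),
\]
using Johansson's variational identity \eqref{eq1}, and then apply the GOE tail \eqref{eqGOEtail}. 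This immediately gives the explicit constant $C=1/(4\sqrt{2\pi})$ and the correct $s^{-3/4}$ prefactor, with no discretization and no modulus-of-continuity input.

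For \eqref{Airyfinite2}, your splitting into subintervals of length $\asymp s^{1/2}$ goes in the wrong direction: applying \eqref{Airyfinite} to such a subinterval yields $(s-a^2)$ with $a^2\asymp s$, which kills the exponent; and your appeal to ``the parabola only helps on the interval farthest from $0$'' is again based on a nonexistent drift (there is no parabola in the statement, and stationarity makes all translates identically distributed). The paper instead takes subintervals of length $1/\sqrt{s}$ (so $a\sqrt{s}$ of them), uses stationarity to translate each to $[0,1/\sqrt{s}]$, and applies \eqref{Airyfinite} with $a^2=1/s$, giving
\[
\P\Big(\sup_{t\in[0,a]}\A_2(t)>s\Big)\le a\sqrt{s}\cdot C\,\frac{e^{-\frac43(s-1/s)^{3/2}}}{(s-1/s)^{3/4}}\le C'\,\frac{a}{s^{1/4}}e^{-\frac43 s^{3/2}}.
\]
So the missing ingredient in both parts is the reduction to the GOE variational formula \eqref{eq1}; once you see that, the lemma is immediate.
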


\begin{proof}
The probability on the left-hand side of \eqref{Airyfinite} can be upper bounded as
\begin{equation}\label{Airyfinitebound}\begin{aligned}
\P\bigg(\sup_{t\in[0,a]}\A_2(t)>s\bigg)&\le\P\bigg(\sup_{t\in[0,a]}\left(\A_2(t)-t^2\right)>s-a^2\bigg)\\
&\le\P\left(\sup_{t\in\R}\left(\A_2(t)-t^2\right)>s-a^2\right)\\
&=1-F_{\text{GOE}}\left(2^{2/3}\left(s-a^2\right)\right)
\end{aligned}\end{equation}
where we extended the range of $t$ in the second inequality and used \eqref{eq1} in the last equality above which is the result of~\cite{Jo03b}.
The proof of the first inequality \eqref{Airyfinite} with $C=1/(4\sqrt{2\pi})$ is completed by applying the upper tail behaviour of the $F_{\rm GOE}$ distribution \eqref{eqGOEtail}.

For \eqref{Airyfinite2}, we divide the interval $[0,a]$ into pieces of length $1/\sqrt{s}$ and by the union bound we get
\begin{equation}\label{eq2.3}\begin{aligned}
\P\bigg(\sup_{t\in[0,a]}\A_2(t)>s\bigg) &\le\sum_{k=1}^{a\sqrt{s}}\P\bigg(\sup_{t\in[(k-1)/\sqrt{s},k/\sqrt{s}]}\A_2(t)>s\bigg)\\
&\le C\, a \sqrt{s} \frac{e^{-\frac43 (s-1/s)^{3/2}}}{(s-1/s)^{3/4}}\\
&\leq C'\, \frac{a}{s^{1/4}}e^{-\frac43 s^{3/2}}
\end{aligned}\end{equation}
for some constant $C'$ where we applied stationarity of the Airy$_2$ process and \eqref{Airyfinite} in the second inequality and the bound $-(s-1/s)^{3/2}\leq -s^{3/2}+2$ for all $s\geq 1$ in the third inequality above.
\end{proof}

With this lemma we are ready to prove Theorem~\ref{thm:Airyparabola}.
\begin{proof}[Proof of Theorem~\ref{thm:Airyparabola}]
To bound the probability that the Airy$_2$ process remains below a parabola, consider an increasing sequence $x_0=0<x_1<x_2<\dots$ to be a partition of $\R_+$ to be specified later.
Then we have by symmetry and the union bound
\begin{equation}\label{Airyparabolabound}\begin{aligned}
\P\left(\sup_{t\in\R}\left(\A_2(t)-(1-c)t^2\right)>s\right)
&\le2\P\left(\cup_{k=0}^\infty\left\{\exists t\in[x_k,x_{k+1}]:\A_2(t)>s+(1-c)t^2\right\}\right)\\
&\le2\sum_{k=0}^\infty\P\left(\exists t\in[x_k,x_{k+1}]:\A_2(t)>s+(1-c)t^2\right)\\
&\le2\sum_{k=0}^\infty\P\left(\exists t\in[x_k,x_{k+1}]:\A_2(t)>s+(1-c)x_k^2\right)\\
&\le2C\sum_{k=0}^\infty\frac{e^{-\frac43\left(s+(1-c)x_k^2-(x_{k+1}-x_k)^2\right)^{3/2}}}
{\left(s+(1-c)x_k^2-(x_{k+1}-x_k)^2\right)^{3/4}}
\end{aligned}\end{equation}
where we decreased the barrier which $\A_2(t)$ has to reach in $[x_k,x_{k+1}]$ in the third inequality, while in the forth inequality we used Lemma~\ref{lemma:Airyfinite} and the translation invariance of the Airy$_2$ process.

The $k=0$ term in the sum on the right-hand side of \eqref{Airyparabolabound} is $e^{-\frac43(s-x_1^2)^{3/2}}/(s-x_1^2)^{3/4}$, hence if we choose $x_1=1/\sqrt s$, then it is still bounded by a constant multiplied by $e^{-\frac43s^{3/2}}/s^{3/4}$.
The rest of the sequence is chosen so that it satisfies
\begin{equation}\label{recurrence}
(x_{k+1}-x_k)^2=\frac14(1-c)x_k^2
\end{equation}
for $k=1,2,\dots$ which is the geometric choice $x_{k+1}=\gamma x_k$ with $\gamma=1+\frac{\sqrt{1-c}}2$.
With this sequence, the right-hand side of \eqref{Airyparabolabound} can be bounded as
\begin{equation}\label{Airyparabolabound2}\begin{aligned}
\P\left(\sup_{t\in\R}\left(\A_2(t)-(1-c)t^2\right)>s\right)
&\le2C\frac{e^{-\frac43(s-\frac1{s})^{3/2}}}{(s-\frac1s)^{3/4}}+2C\sum_{k=1}^\infty \frac{e^{-\frac43\left(s+\frac34(1-c)x_k^2\right)^{3/2}}}{s^{3/4}}\\
&\le2C'\frac{e^{-\frac43s^{3/2}}}{s^{3/4}}+2C'\frac{e^{-\frac43s^{3/2}}}{s^{3/4}}
\sum_{k=1}^\infty e^{-\frac43\left(\frac34(1-c)\gamma^{2(k-1)} s^{-1}\right)^{3/2}}
\end{aligned}\end{equation}
where we used the inequality $(a+b)^{3/2}\ge a^{3/2}+b^{3/2}$ in the last step.
The sum on the right-hand side of \eqref{Airyparabolabound2} can be upper bounded using Lemma~\ref{lem2.3} below with $\alpha=\gamma^3$ and $\beta=\sqrt{3}(1-c)^{3/2}/(2s^{3/2})$ as
\begin{equation}\label{sumbound}
\sum_{k=1}^\infty e^{-\frac{\sqrt{3}(1-c)^{3/2}}{2s^{3/2}}\gamma^{3(k-1)}}
\le\frac{\ln\left(1+\frac{2\gamma^3s^{3/2}}{\sqrt{3}(1-c)^{3/2}}\right)}{3\ln\gamma}
\le\wt C\,\frac{\ln(s/(1-c))}{\sqrt{1-c}}
\end{equation}
for $s$ large enough with some $\wt C$  which does not depend on $c$.
In the last inequality above we used that $\ln\gamma\sim \frac12\sqrt{1-c}$ as $c\to1$.
The inequalities \eqref{Airyparabolabound2} and \eqref{sumbound} together prove \eqref{Airyparabola}.
\end{proof}

\begin{lemma}\label{lem2.3}
Let $\alpha>1$ and $\beta>0$. Then
\begin{equation}
\sum_{k=0}^\infty e^{-\beta\alpha^k}\le \frac{\ln(1+\alpha/\beta)}{\ln\alpha}
\end{equation}
\end{lemma}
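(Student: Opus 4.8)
The plan is to bound the sum by an integral and then reduce that integral to an elementary one. Since $\alpha>1$ and $\beta>0$, the function $x\mapsto e^{-\beta\alpha^x}$ is positive and strictly decreasing on $\R$, so for every $k\ge0$ one has $e^{-\beta\alpha^k}\le\int_{k-1}^{k}e^{-\beta\alpha^x}\,\d x$. Summing over $k\ge0$ and noting that the intervals $[k-1,k]$ tile $[-1,\infty)$ gives
\[
\sum_{k=0}^\infty e^{-\beta\alpha^k}\le\int_{-1}^\infty e^{-\beta\alpha^x}\,\d x .
\]
Taking the lower endpoint to be $-1$ rather than $0$ is exactly what is needed to land on the stated constant.

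Next I would change variables by setting $u=\beta\alpha^x$, so that $\d u=u\ln\alpha\,\d x$ and the lower limit $x=-1$ becomes $u=\beta/\alpha$, yielding
\[
\int_{-1}^\infty e^{-\beta\alpha^x}\,\d x=\frac1{\ln\alpha}\int_{\beta/\alpha}^\infty\frac{e^{-u}}{u}\,\d u .
\]
It then remains to show $\int_{\beta/\alpha}^\infty e^{-u}u^{-1}\,\d u\le\ln(1+\alpha/\beta)$. For this I would invoke the elementary inequality $e^u\ge1+u$ (valid for all $u\in\R$), which for $u>0$ gives $e^{-u}/u\le\frac1{u(1+u)}=\frac1u-\frac1{1+u}$; the right-hand side has the explicit primitive $\ln\frac{u}{1+u}$, hence
\[
\int_{\beta/\alpha}^\infty\frac{e^{-u}}{u}\,\d u\le\Big[\ln\tfrac{u}{1+u}\Big]_{u=\beta/\alpha}^{u=\infty}=-\ln\frac{\beta/\alpha}{1+\beta/\alpha}=\ln\!\left(1+\frac\alpha\beta\right).
\]
Combining the three displays proves the lemma, and it also shows the integral in the first display is finite, which legitimizes the comparison.

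There is no serious obstacle here. The only point requiring a moment's care is the choice of the upper bound for $e^{-u}/u$: it must be integrable at $+\infty$ and possess a closed-form primitive matching the target $\ln(1+\alpha/\beta)$, so cruder estimates such as $e^{-u}\le1$ are useless. The bound $e^{-u}\le1/(1+u)$ is precisely sharp enough and, crucially, produces a constant depending only on $\alpha/\beta$, which is what the application to \eqref{sumbound} requires.
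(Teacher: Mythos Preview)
Your proof is correct and follows essentially the same route as the paper: bound the sum by $\int_{-1}^\infty e^{-\beta\alpha^x}\,\d x$, change variables to reduce to the exponential integral $E_1(\beta/\alpha)$, and then bound $E_1$ by $\ln(1+\alpha/\beta)$. The only difference is that the paper quotes the inequality $E_1(x)\le e^{-x}\ln(1+1/x)$ from Abramowitz--Stegun, whereas you derive the (slightly weaker but sufficient) bound $E_1(x)\le\ln(1+1/x)$ directly from $e^{-u}\le 1/(1+u)$ and partial fractions, which makes your argument self-contained.
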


\begin{proof}
We start by bounding the sum by an integral as
\begin{equation}\label{eq2.9}
\sum_{k=0}^\infty e^{-\beta\alpha^k}\leq \int_{-1}^\infty \d x\, e^{-\beta \alpha^x}.
\end{equation}
The change of variables $w=\beta\alpha^x$ gives $\frac{\d w}{\d x}=w \ln\alpha$ so that the right-hand side of \eqref{eq2.9}
\begin{equation}
\int_{-1}^\infty \d x\, e^{-\beta \alpha^x}= \frac{1}{\ln\alpha} \int_{\beta/\alpha}^\infty \d w\,\frac{e^{-w}}{w}
=:\frac{1}{\ln\alpha}E_1(\beta/\alpha).
\end{equation}
Equation (5.1.20) of~\cite{AS84} provides a bound on the exponential integral function $E_1$, namely $E_1(x)\leq e^{-x}\ln(1+1/x)$ for $x>0$. Thus
\begin{equation}
\sum_{k=0}^\infty e^{-\beta\alpha^k} \leq \frac{1}{\ln\alpha} e^{-\beta/\alpha}\ln(1+\alpha/\beta)
\end{equation}
which gives the claimed bound since $\alpha,\beta>0$.
\end{proof}

\section{Supremum of Brownian motion minus a parabola}\label{s:groeneboom}

\begin{proposition}\label{prop:Groeneboom}
Let $G(x)=\P\left(\max_{t\in\R}(B(t)-\tfrac12 t^2)\geq x\right)$ where $B(t)$ is a standard two-sided Brownian motion.
Then, as $x\to\infty$ we have
\begin{equation}\label{eqGroenExpansion}
G(x)= 3^{-1/2} e^{-\frac43 \sqrt{\frac23}\,x^{3/2}} (1+\O(x^{-1/4}))
\end{equation}
as well as
\begin{equation}\label{eqGroenExpansionB}
-\frac{\d}{\d x}G(x)= \frac{2\sqrt{2}}{3} e^{-\frac43 \sqrt{\frac23}\,x^{3/2}} \sqrt{x} (1+\O(x^{-1/4})).
\end{equation}
Consequently, for any $c>0$ the density of the random variable $\max_{t\in\R}(B(t)-ct^2)$ satisfies
\begin{equation}\label{eqGroenExpansionC}
f_c(x):=-\frac{\d}{\d x}G\left((2c)^{1/3}x\right)=\frac43\sqrt c\,e^{-\frac43 \sqrt{\frac43c}\,x^{3/2}} \sqrt{x} (1+\O(x^{-1/4}))
\end{equation}
as $x\to\infty$.
\end{proposition}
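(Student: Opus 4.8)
The plan is to address the three asymptotic statements in decreasing order of difficulty: I would first establish the density asymptotics \eqref{eqGroenExpansionB}, then integrate it to obtain the tail asymptotics \eqref{eqGroenExpansion}, and finally deduce the general-$c$ formula \eqref{eqGroenExpansionC} by Brownian scaling. The scaling step is immediate: if $M_c:=\max_{t\in\R}(B(t)-ct^2)$, then the substitution $t=(2c)^{-2/3}s$ combined with the Brownian scaling identity $B((2c)^{-2/3}s)\stackrel{\mathrm d}{=}(2c)^{-1/3}B(s)$ shows that $M_c$ has the same distribution as $(2c)^{-1/3}M_{1/2}$, whence $\P(M_c\ge x)=G((2c)^{1/3}x)$ and $f_c(x)=(2c)^{1/3}(-G')((2c)^{1/3}x)$. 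Substituting $y=(2c)^{1/3}x$ into \eqref{eqGroenExpansionB}, so that $y^{3/2}=(2c)^{1/2}x^{3/2}$ and $\sqrt y=(2c)^{1/6}\sqrt x$, and collecting the powers of $2c$, turns the constant $\tfrac{2\sqrt2}{3}$ into $(2c)^{1/2}\tfrac{2\sqrt2}{3}=\tfrac43\sqrt c$ and the exponent into $-\tfrac43\sqrt{\tfrac43 c}\,x^{3/2}$, which is exactly \eqref{eqGroenExpansionC}; the relative error $\O(y^{-1/4})$ stays $\O(x^{-1/4})$.

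For \eqref{eqGroenExpansionB} itself I would start from the exact integral representation of the density of $M_{1/2}$ found by Janson, Louchard and Martin-L\"of in~\cite{JLML10} — the representation that is a genuine integral, as opposed to Groeneboom's formula written as a linear combination of $\Ai$ and $\Bi$. After rescaling the integration variable so that a single large parameter of order $x^{3/2}$ multiplies a phase function, I would apply the Laplace/steepest-descent method: locate the unique critical point $v_\ast$ (heuristically the one corresponding to the optimal time $t_\ast=\sqrt{2x/3}$ at which $B$ first reaches the shifted parabola $x+\tfrac12 t^2$ — the leading exponent $\tfrac43\sqrt{\tfrac23}\,x^{3/2}$ being already fixed as $\min_{t>0}(x+\tfrac12 t^2)^2/(2t)$, the Gaussian large-deviation cost of that event), Taylor-expand the phase and the amplitude to the required order around $v_\ast$, evaluate the resulting Gaussian-type integral, and read off the constant $\tfrac{2\sqrt2}{3}$, the $\sqrt x$ prefactor and the relative error $\O(x^{-1/4})$; I expect the $x^{-1/4}$ to come from the first correction to the amplitude near $v_\ast$ rather than from a full asymptotic expansion.

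Once \eqref{eqGroenExpansionB} is in hand, \eqref{eqGroenExpansion} follows by integrating $G(x)=\int_x^\infty(-G'(y))\,\d y$: with the substitution $u=y^{3/2}$ one has $\int_x^\infty e^{-\frac43\sqrt{2/3}\,y^{3/2}}\sqrt y\,\d y=\tfrac23\bigl(\tfrac43\sqrt{2/3}\bigr)^{-1}e^{-\frac43\sqrt{2/3}\,x^{3/2}}$, so the density prefactor $\tfrac{2\sqrt2}{3}$ collapses to $3^{-1/2}$, while the $\O(y^{-1/4})$ term contributes a relative $\O(x^{-1/4})$ after the same substitution. Here it is essential that the relative error in \eqref{eqGroenExpansionB} be uniform in large $y$ so that it can legitimately be integrated.

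The main obstacle is exactly the difficulty flagged in the introduction: Groeneboom's $\Ai$/$\Bi$ formula produces the leading exponential only through a delicate cancellation between an exponentially large $\Bi$-contribution and an exponentially small $\Ai$-contribution, so term-by-term Airy asymptotics are useless and one must work with an integral representation in which this cancellation is already performed. Concretely the technical work is (i) selecting the JLML representation best suited to the Laplace method and bringing it to the form ``large parameter $\times$ phase'', (ii) justifying the interchange of the $x\to\infty$ limit with the integral and bounding the contribution away from $v_\ast$, and (iii) keeping the error terms uniform so that the passage from the density to the tail is valid. By contrast, the scaling reduction and the final integration are routine.
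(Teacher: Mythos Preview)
Your proposal is correct and follows essentially the same approach as the paper: both rely on the Janson--Louchard--Martin-L\"of contour-integral representation, a steepest-descent analysis near the critical point (the paper places the vertical contour through $z=\tfrac{2^{1/3}}{3}x$, which corresponds exactly to your heuristic $t_\ast=\sqrt{2x/3}$), and Brownian scaling for the general-$c$ statement. The only difference is the order of derivation: the paper carries out the steep-descent computation directly for both $G$ and $-G'$ (the latter simply by replacing $\Ai(z+2^{1/3}x)$ with $2^{1/3}\Ai'(z+2^{1/3}x)$ in the integrand and using the asymptotics of $\Ai'$), whereas you do it once for the density and then integrate to recover $G$ --- both routes are valid and produce the same constants and $\O(x^{-1/4})$ error.
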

The proof is given below. The distribution function $G(x)$ is written as a contour integral in Lemma~3.5 of~\cite{JLML10} as follows:
\begin{equation}\label{Gcontint}
G(x)=\frac{1}{2\I}\int_\gamma\d z\, \frac{\Hi(z)}{\Ai(z)}\Ai(z+2^{1/3}x)
\end{equation}
where $\gamma$ is a path passing to the right of all zeroes of the Airy function $\Ai$ from $-\I\infty$ to $\I\infty$.
The function $\Hi$ is defined by (see (10.4.44) of~\cite{AS84})
\begin{equation}\label{defHi}
\Hi(z)=\pi^{-1}\int_0^\infty \d t\, e^{-t^3/3+z t}.
\end{equation}
In~\cite{JLML10}, the contour $\gamma$ in \eqref{Gcontint} was chosen to come from $e^{-i\theta}\infty$ and arrive to $e^{i\theta}\infty$ with $\theta$ slightly larger than $\pi/2$.
The reason why this contour can be deformed to the vertical one is the following.
Lemma~A.1 of~\cite{JLML10} was used to argue for convergence of the integral \eqref{Gcontint}: they showed the decay of the ratio of the two Airy $\Ai$ functions, together with a bound on $\Hi$ for contours with angle more than $\pi/2$.
The bound in Lemma~\ref{lemma:Hibound} is enough to get the convergence also for vertical contours.

For the proof of \eqref{prop:Groeneboom}, the asymptotic of the Airy and $\Hi$ functions will be needed.
By (10.4.90) of~\cite{AS84}, for $x$ real,
\begin{equation}\label{eq3.6}
\Hi(x)\sim \pi^{-1/2}x^{-1/4}e^{\frac23 x^{3/2}}\textrm{ as }x\to\infty.
\end{equation}
For our purposes, we need also the asymptotic behavior of $\Hi(z)$ for complex-valued $z$ close to $2^{1/3} x/3$ which we state below and prove later in this section.
\begin{lemma}\label{LemmaHi}
Let $z$ be such that $|\arg(z)|<\pi/3$. Then for large $z$ we have the asymptotic behavior
\begin{equation}\label{Hiasymptotic}
\Hi(z)\, e^{-\frac23 z^{3/2}} = \pi^{-1/2} z^{-1/4}+\O(|z|^{-1/2}).
\end{equation}
\end{lemma}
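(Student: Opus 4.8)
The plan is to apply Laplace's method to the integral representation \eqref{defHi}, namely $\Hi(z)=\pi^{-1}\int_0^\infty e^{-t^3/3+zt}\,\d t$, after rotating the contour so that it runs through the relevant saddle. The exponent $\phi(t)=zt-t^3/3$ has its saddle at $t_\ast=z^{1/2}$ (principal branch, so $|\arg t_\ast|<\pi/6$), where $\phi(t_\ast)=\tfrac23 z^{3/2}$; this already accounts for the factor $e^{-\frac23 z^{3/2}}$ in \eqref{Hiasymptotic}. Substituting $t=z^{1/2}u$ turns the integral into
\[
\Hi(z)=\pi^{-1}z^{1/2}\int_0^\infty e^{z^{3/2}(u-u^3/3)}\,\d u ,
\]
once one justifies deforming the ray $[0,\infty)$ onto the ray $\{z^{1/2}u:u\ge 0\}$. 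This is the one delicate step, and it is here that the hypothesis $|\arg z|<\pi/3$ enters: on the circular arc $\{Re^{\I\psi}\}$ joining the two rays one has $|3\psi|<\pi/2$, so $\Re(-t^3/3)=-\tfrac13R^3\cos(3\psi)\le -c R^3$ dominates the linear term $\Re(zt)=O(R)$ and the arc contribution vanishes as $R\to\infty$; Cauchy's theorem then gives the displayed identity. The same angle bound gives $\Re(z^{3/2})>0$, so both integrals converge, and it keeps $\cos(3\arg t)$ bounded away from $0$ along the arc, uniformly for $|\arg z|\le\pi/3-\delta$.

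With $\lambda:=z^{3/2}$ (so $\Re\lambda>0$), what remains is the asymptotics of $I(\lambda)=\int_0^\infty e^{\lambda g(u)}\,\d u$ as $\lambda\to\infty$, where $g(u)=u-u^3/3$ is now a genuine real phase: it increases on $(0,1)$, decreases on $(1,\infty)$, and has a unique nondegenerate maximum at $u=1$ with $g(1)=\tfrac23$. The useful algebraic fact is the factorization $\tfrac23-g(u)=\tfrac13(u-1)^2(u+2)$, which lets me make the \emph{explicit} analytic change of variables $w:=(u-1)\sqrt{(u+2)/3}$ (positive square root, so $w$ inherits the sign of $u-1$): it is a strictly increasing bijection of $[0,\infty)$ onto $[-\sqrt{2/3},\infty)$ with $\d w/\d u\big|_{u=1}=1$, so $h(w):=\d u/\d w$ is analytic near $w=0$ with $h(0)=1$ and is of at most polynomial growth. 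Then
\[
I(\lambda)=e^{\frac23\lambda}\int_{-\sqrt{2/3}}^\infty e^{-\lambda w^2}h(w)\,\d w .
\]
Writing $h=1+(h-1)$: the Gaussian integral of $1$ gives $\sqrt{\pi/\lambda}$ up to an exponentially small tail (using $\Re\lambda\ge c_0|\lambda|$ for $|\arg z|\le\pi/3-\delta$), while $h(w)-1=O(w)$ near $0$ and is polynomially bounded globally, so $\int e^{-\lambda w^2}(h(w)-1)\,\d w=O(|\lambda|^{-3/2})$ (the linear part integrates over the asymmetric interval to an exponentially small quantity, the quadratic and higher parts give $O(|\lambda|^{-3/2})$, the large-$|w|$ region is exponentially small). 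Hence $I(\lambda)=e^{\frac23\lambda}\bigl(\sqrt{\pi/\lambda}+O(|\lambda|^{-3/2})\bigr)$.

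Multiplying by $\pi^{-1}z^{1/2}$ and using $z^{1/2}\lambda^{-1/2}=z^{-1/4}$ yields $\Hi(z)e^{-\frac23 z^{3/2}}=\pi^{-1/2}z^{-1/4}+O(|z|^{-7/4})$, which is in fact stronger than the claimed \eqref{Hiasymptotic}; the $O(|z|^{-1/2})$ stated there is deliberately generous, since that is all that is used later. The genuine obstacle is the contour deformation of the first paragraph --- checking that the rotated ray is still a descent path for the full cubic exponent --- for which the constraint $|\arg z|<\pi/3$ (equivalently $|\arg z^{1/2}|<\pi/6$) is sharp; everything after that is a routine Laplace-type estimate. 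If one prefers to avoid the explicit substitution, the same conclusion follows by restricting the $u$-integral to $|u-1|\le|\lambda|^{-1/3}$, Taylor expanding $g$ there, and controlling the complement by $g(u)\le\tfrac23-c|\lambda|^{-2/3}$ on $|u-1|\ge|\lambda|^{-1/3}$ together with the convergence of the tail $\int^\infty e^{\lambda g}$.
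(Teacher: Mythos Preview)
Your argument is correct and is a clean alternative to the paper's proof. The paper instead performs the shift $t=\sqrt{z}+u$, so that the exponent becomes $\tfrac23 z^{3/2}-\tfrac13 u^3-\sqrt{z}\,u^2$, and then deforms the contour $[-\sqrt{z},\infty)$ to a vertical segment followed by the real axis; on the vertical piece the integrand is shown to be exponentially small, and on the real axis a Gaussian approximation with cutoff $|u|\le\delta=r^{-1/6}$ gives the leading term $\pi^{-1/2}z^{-1/4}$ with error $\O(\delta^3)=\O(|z|^{-1/2})$. Your route---rotating the ray first so that after $t=z^{1/2}u$ one has a genuinely real phase $g(u)=u-u^3/3$ and only a complex large parameter $\lambda=z^{3/2}$, then exploiting the exact factorization $\tfrac23-g(u)=\tfrac13(u-1)^2(u+2)$ to make the change of variables $w=(u-1)\sqrt{(u+2)/3}$ explicit---is somewhat more streamlined: only one contour deformation is needed, the Laplace step is fully explicit, and (as you note) the resulting error $\O(|z|^{-7/4})$ is sharper than the $\O(|z|^{-1/2})$ coming from the paper's choice of $\delta$. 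The paper's approach, on the other hand, avoids having to track a complex large parameter in the Laplace estimate. Both rely on $|\arg z|<\pi/3$ in the same essential way (to keep $\cos(3\psi)>0$ on the deformation arc and $\Re(z^{3/2})>0$), and both give the statement with the implied constants uniform only on $|\arg z|\le\pi/3-\delta$.
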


\begin{lemma}\label{lemma:Hibound}
Let $\theta\in[\pi/2,3\pi/2]$ and $x\in\R$.
Then for all $y\geq 0$,
\begin{equation}\label{Hibound}
\left|\Hi\left(x+e^{\I\theta}y\right)\right|\le\Hi(x).
\end{equation}
\end{lemma}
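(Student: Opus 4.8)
The plan is to argue directly from the integral representation \eqref{defHi} of $\Hi$, with no asymptotics involved. Writing $z=x+e^{\I\theta}y$ and inserting it into $\Hi(z)=\pi^{-1}\int_0^\infty\d t\,e^{-t^3/3+zt}$, I would factor the integrand as $e^{-t^3/3+xt}\cdot e^{e^{\I\theta}yt}$. For $t\geq 0$ the first factor is real and strictly positive, so it equals its own modulus, while the modulus of the second factor is $e^{\Re(e^{\I\theta}yt)}=e^{yt\cos\theta}$.

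The crucial point is the sign of $\cos\theta$: for $\theta\in[\pi/2,3\pi/2]$ we have $\cos\theta\leq 0$, so with $y\geq 0$ and $t\geq 0$ the exponent $yt\cos\theta$ is nonpositive, hence $|e^{e^{\I\theta}yt}|\leq 1$. Since the cubic term $-t^3/3$ makes the integrand absolutely integrable, I may pass the modulus inside the integral (triangle inequality for integrals) and then simply drop the factor $e^{yt\cos\theta}\leq 1$:
\begin{equation*}
\left|\Hi\!\left(x+e^{\I\theta}y\right)\right|\leq\pi^{-1}\int_0^\infty\d t\,e^{-t^3/3+xt}\,e^{yt\cos\theta}\leq\pi^{-1}\int_0^\infty\d t\,e^{-t^3/3+xt}=\Hi(x),
\end{equation*}
which is exactly \eqref{Hibound}. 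There is essentially no obstacle here; the only point worth recording is the absolute convergence that legitimizes pulling the modulus inside the integral, and that is immediate from the $-t^3/3$ term. This bound, being uniform in $y$, is precisely what is needed to replace Lemma~A.1 of~\cite{JLML10} and justify deforming the contour $\gamma$ in \eqref{Gcontint} to a vertical line.
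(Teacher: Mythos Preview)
Your proof is correct and follows essentially the same approach as the paper's: factor the integrand in the integral representation \eqref{defHi} as $e^{-t^3/3+xt}\cdot e^{e^{\I\theta}yt}$, observe that $|e^{e^{\I\theta}yt}|\le1$ for $\theta\in[\pi/2,3\pi/2]$ and $t\ge0$, and bound the modulus of the integral by the integral of the positive part. Your version is slightly more explicit about the justification (absolute integrability, triangle inequality), but the argument is identical.
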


\begin{proof}
By the defintion \eqref{defHi},
\begin{equation}\label{Hibound2}
\Hi\left(x+e^{\I\theta}y\right)=\pi^{-1}\int_0^\infty \d t\, e^{-t^3/3}e^{xt}e^{e^{\I\theta}y t}
\end{equation}
where $|e^{e^{\I\theta}y t}|\le1$ for all $\theta\in[\pi/2,3\pi/2]$ and $t\ge0$.
Since $e^{-t^3/3}e^{xt}$ is positive, the absolute value of the integral in \eqref{Hibound2} can be upper bounded by the integral of $e^{-t^3/3}e^{xt}$ which yields \eqref{Hibound}.
\end{proof}

\begin{proof}[Proof of Proposition~\ref{prop:Groeneboom}]
Let us first prove \eqref{eqGroenExpansion}. In order to estimate $G(x)$ for large $x$, we use the integral representation \eqref{Gcontint}.
The integration contour is chosen to be vertical $\gamma=\frac{2^{1/3}}{3}x+\I\R$.
If $x>0$, then for $z\in\gamma$, we have $\arg(z)\in[-\pi/2,\pi/2]$.
Hence we can use the asypmtotics of the Airy function
\begin{equation}\label{Airyasymp}
\Ai(z)=\tfrac12 \pi^{-1/2} z^{-1/4} e^{-\frac23 z^{3/2}}(1+O(1/z)) \textrm{ for }|\arg(z)|<\pi,
\end{equation}
see (10.4.59) of~\cite{AS84}. Let us parameterize the path $\gamma$ as $z=\frac{2^{1/3}}3x+\I 2^{1/3} x v$, $v\in\R$.

\emph{Contribution for $|v|>1/3$.}
Using Lemma~\ref{lemma:Hibound} and~\ref{LemmaHi} for the $\Hi$ function and the asymptotic \eqref{Airyasymp} on the Airy functions, the contribution for $|v|>1/3$ is bounded by
\begin{equation}\label{eq3.11}
C x^{3/4}\int_{1/3}^\infty \d v\, e^{-x^{3/2}g(v)},\quad g(v)=\frac{2\sqrt{2}}{9\sqrt{3}}\Re[(4+3 \I v)^{3/2}-(1+3 \I v)^{3/2}-1]
\end{equation}
for some constant $C$. Notice that for $a\geq 0$,
\begin{equation}
\frac{\d \Re((a+\I v)^{3/2})}{\d a}=-\frac32 \Im\sqrt{a+\I v}
\end{equation}
which is an increasing function of $a$.
This implies that $g(v)$ is monotone increasing in $v\geq 0$ with $g(v)\sim\sqrt{v}$ as $v\to\infty$.
We get that the leading behavior of the integral is bounded by $C' e^{-x^{3/2} g(1/3)}$ with $g(1/3)-g(0)<0$, for some other constant $C'$. Thus this contribution is vanishing with respect to the leading one computed below.

\emph{Contribution for $|v|\leq 1/3$.}
For the leading contribution, we use the asymptotic expansion of Lemma~\ref{LemmaHi} and \eqref{Airyasymp}, with the result that the contribution of $G(x)$ coming from the integral over $|v|\leq 1/3$ is given by
\begin{multline}
2^{-2/3}x\int_{|v|\leq 1/3} \d v\,
\frac{\Hi\left(\frac{2^{1/3}}3x+\I2^{1/3}xv\right)}{\Ai\left(\frac{2^{1/3}}3x+\I2^{1/3}xv\right)}
\Ai\left(\frac{4\cdot2^{1/3}}3x+\I2^{1/3}xv\right)\\
=\frac{x^{3/4}}{\sqrt{\pi}2^{3/4}} \int_{|v|\leq 1/3} \d v\,\frac{e^{ x^{3/2} h(v)}}{(4/3+\I v)^{1/4}}(1+\O(1/x^{1/4}))
\end{multline}
with $h(v)=\frac43 \sqrt{2} ((1/3+\I v)^{3/2}-(4/3+\I v)^{3/2}/2)$. Further, one notices that $\Re(h(v))$ is strictly increasing for $v<0$ and strictly decreasing for $v>0$ with a quadratic approximation for small $v$ given by
\begin{equation}
h(v)= -\frac{4\sqrt{2}}{3\sqrt{3}}-\frac{3\sqrt{3}}{4\sqrt{2}}v^2+\O(v^3).
\end{equation}
Then, using standard steep descent analysis, we get \eqref{eqGroenExpansion}.

The proof of \eqref{eqGroenExpansionB} is similar. The only difference is that we need to replace the asymptotic expansion of $\Ai(z+2^{1/3}x)$ with the one of $2^{1/3}\Ai'(z+2^{1/3}x)$. By (10.4.61) of~\cite{AS84}, we have $\Ai'(z)=-\frac12\pi^{-1/2}z^{1/4}e^{-2 z^{3/2}/3}(1+\O(1/z))$. Thus in the asymptotic analysis we need to replace $z^{-1/4}$ with $-2^{1/3}z^{1/4}$. This gives the claimed result.

By replacing $t$ by $(2c)^{-2/3}t$ and using Brownian rescaling,
\begin{equation}
\P\left(\max_{t\in\R}\left(B(t)-c t^2\right)\geq x\right)
=\P\left(\max_{t\in\R}\left(B(t)-\tfrac12 t^2\right)\geq x(2c)^{1/3}\right)
=G\left((2c)^{1/3}x\right),
\end{equation}
hence \eqref{eqGroenExpansionC} follows from \eqref{eqGroenExpansionB} by substitution.
\end{proof}

Now we prove the claimed asymptotic expansion of $\Hi$.
\begin{proof}[Proof of Lemma~\ref{LemmaHi}]
By symmetry with respect to the real axis ($z\mapsto \bar z$), consider $z$ with $\arg(z)\in [0,\pi/3)$. We parameterize $z=r e^{\I \theta}$ with $r>0$ and $\theta\in [0,\pi/3)$. We introduce the change of variables $t=\sqrt{z}+u$ which yields
\begin{equation}
-\frac13 t^3+z t = \frac23 z^{3/2}-\frac13 u^3-\sqrt{z} u^2
\end{equation}
and we get
\begin{equation}\label{eqHiRepr}
\Hi(z)\, e^{-2 z^{3/2}/3} = \pi^{-1} \int_{-\sqrt{z}}^\infty \d u\, e^{-u^3/3-\sqrt{z} u^2}.
\end{equation}
For large values of $|z|$, the leading contribution comes from a neighborhood of $0$.
Consider the integration contour $\Gamma=\Gamma_1\vee\Gamma_2$ where $\Gamma_1=\{-\sqrt{z}+\I y,0\leq y\leq \Im(\sqrt{z})\}$ and $\Gamma_2=\{x, -\Re(\sqrt{z})\leq x<\infty\}$.

Consider first the contribution on the contour $\Gamma_1$. Let $f(u)=\Re(-u^3/3-\sqrt{z} u^2)$.
Then we have
\begin{equation}
f(-\sqrt{z})=-\frac23 \Re(z^{3/2})=-\frac23 r^{3/2}\cos(3\theta/2)<0 = f(0).
\end{equation}
Setting $u=-\sqrt{z}+\I y=-\sqrt{r}\cos(\theta/2)-\I\sqrt{r}\sin(\theta/2)+\I y$, we obtain
\begin{equation}
\Re(-u^3/3-\sqrt{z}u^2) = {\rm const} -\sqrt{r}\sin(\theta) y,
\end{equation}
which is decreasing in $y$.
Thus the contribution of the integral of \eqref{eqHiRepr} over $\Gamma_1$ can bounded by the maximum of the integrand times the length of the contour, that is, by $\pi^{-1} \Im(\sqrt{z}) e^{f(-\sqrt{z})}\leq C e^{-\frac23 r^{3/2}\cos(3\theta/2)}$.

Next we focus on the contribution over $\Gamma_2$. We have, for all $u\geq -\sqrt{r}\cos(\theta/2)=-\Re(\sqrt z)$, the bound
\begin{equation}
\Re(-u^3/3-\sqrt{z}u^2)=-\tfrac13u^3-\sqrt{r}\cos(\theta/2) u^2\leq -\tfrac23 \sqrt{r}\cos(\theta/2) u^2.
\end{equation}
For any $\delta>0$, which can be chosen later as a function of $r$, the tails of the Gaussian integral gives
\begin{equation}
\left|\pi^{-1} \int_{\Gamma_2\setminus \{|u|\leq \delta\}} du\, e^{-u^3/3-\sqrt{z} u^2}\right|\leq C e^{-\tfrac23 \sqrt{r}\cos(\theta/2) \delta^2}.
\end{equation}
Next, the local contribution is close to the integral with only the quadratic term. Indeed, using $|e^x-1|\leq |x| e^{|x|}$, we have
\begin{multline}
\left|\pi^{-1} \int_{\{|u|\leq \delta\}} \d u\, e^{-u^3/3-\sqrt{z} u^2}-\pi^{-1} \int_{\{|u|\leq \delta\}} \d u\, e^{-\sqrt{z} u^2}\right|\\
\leq  \pi^{-1} \int_{\{|u|\leq \delta\}} \d u\left|e^{-u^3/3-\sqrt{z} u^2}\right| \frac{|u|^3}{3}=\O(\delta^3).
\end{multline}
Extending the integration in the Gaussian integral to $\R$, we only make a small error, namely
\begin{equation}
\left|\pi^{-1} \int_{\{|u|\leq \delta\}} \d u\, e^{-\sqrt{z} u^2} - \pi^{-1} \int_{\R} \d u\, e^{-\sqrt{z} u^2}\right|\leq \O(e^{-\delta^2 \sqrt{r}\cos(\theta/2)}).
\end{equation}
Finally, the Gaussian integral can be computed explicitly as
\begin{equation}
\pi^{-1} \int_{\R} \d u\, e^{-\sqrt{z} u^2} = \pi^{-1/2} z^{-1/4}.
\end{equation}
Combining all these bounds, we get
\begin{equation}
\Hi(z)\, e^{-\frac23 z^{3/2}}=\pi^{-1/2} z^{-1/4}
+\O(\delta^3,\sqrt{r} e^{-\frac23r^{3/2}\cos(3\theta/2)}, e^{-\frac23 \sqrt{r}\cos(\theta/2)\delta^2}).
\end{equation}
Now, since $\theta\in [0,\pi/3)$, we have $\cos(3\theta/2),\cos(\theta/2)\in [1/\sqrt{2},1]$.
By choosing $\delta=r^{-1/6}$, we get \eqref{Hiasymptotic}.
\end{proof}

\section{Tail bounds for random initial conditions}\label{s:Fsigmatail}

In this section we prove Theorem~\ref{thm:Fsigmatail} about the upper tail decay of the $F^{(\sigma)}(s)$ distribution, which follows by combining Propositions~\ref{prop:Fsigmalower} and~\ref{prop:Fsigmaupper} below.

Fix $c\in(0,1)$ and let
\begin{equation}\label{deftauM}
\tau_c=\arg\max\left(\sqrt2\sigma B(t)-ct^2\right),\quad
M_c=\max_{t\in\R}\left(\sqrt2\sigma B(t)-ct^2\right)=\sqrt2\sigma B(\tau_c)-c\tau_c^2
\end{equation}
be the position and the value of the maximum of the two-sided Brownian motion $\sqrt2\sigma B(t)$ with diffusion coefficient $2\sigma^2$ where $B(t)$ is a standard one.
Note that
\begin{equation}
M_c=\max_{t\in\R}\left(\sqrt2\sigma B(t)-ct^2\right)
=\max_{u\in\R}\left(\sqrt2\sigma B\left(\frac u{2\sigma^2}\right)-\frac{cu^2}{4\sigma^4}\right)
\stackrel{\d}{=}\max_{u\in\R}\left(B(u)-\frac c{4\sigma^4}u^2\right)
\end{equation}
where the second equality follows by the change of variables $t=u/(2\sigma^2)$ and the third one by Brownian scaling.
As a consequence, the random variable $M_c$ has density $f_{\frac c{4\sigma^4}}(x)$ where $f_c$ was defined in \eqref{eqGroenExpansionC}.

\paragraph{Lower bound.} The idea of the lower bound is to use the inequality
\begin{equation}
\sup_{t\in \R} \big\{\sqrt{2}\sigma B(t)+\A_2(t)-t^2\big\}\geq \sqrt{2}\sigma B(t_0)+\A_2(t_0)-t_0^2
\end{equation}
which holds for any choice of $t_0\in\R$.
Furthermore, $\P\left(\sqrt{2}\sigma B(t_0)+\A_2(t_0)-t_0^2>s\right)$ will be the largest, that is, we get the best lower bound if we take a time $t_0$ where $\sqrt{2}\sigma B(t_0)+\A_2(t_0)-t_0^2$ is the largest.
As the Airy$_2$ process is stationary and independent of $B$, it does not make any difference for $\A_2(t_0)$ which time is chosen. Thus the idea is to choose $t_0$ to be the random time $\tau_1$ which maximizes $\sqrt{2}\sigma B(t)-t^2$.

\begin{proposition}\label{prop:Fsigmalower}
For all $\sigma>0$, there is a constant $C_1$ independent of $s$ such that
\begin{equation}
1-F^{(\sigma)}(s) \geq C_1 \sigma^2 (1+3\sigma^4)^{1/4} s^{-3/4}e^{-\frac43\frac1{\sqrt{1+3\sigma^4}}s^{3/2}}
\end{equation}
holds for $s\gg \max\{\sigma^{-4},\sigma^4\}$.
\end{proposition}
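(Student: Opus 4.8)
The plan is to turn the heuristic stated above the proposition into a quantitative bound. For \emph{any} $B$-measurable time $t_0$ one has the deterministic inequality $\sup_{t\in\R}\{\sqrt2\sigma B(t)+\A_2(t)-t^2\}\ge \sqrt2\sigma B(t_0)+\A_2(t_0)-t_0^2$, and I would take $t_0=\tau_1$, the (a.s.\ well defined) argmax of $\sqrt2\sigma B(t)-t^2$ from \eqref{deftauM}, so that $\sqrt2\sigma B(\tau_1)-\tau_1^2=M_1$ and the right-hand side equals $M_1+\A_2(\tau_1)$. Since $\tau_1$ and $M_1$ are measurable functions of the path of $B$, and $\A_2$ has continuous sample paths, is stationary, and is independent of $B$, conditioning on $B$ gives that $\A_2(\tau_1)$ has conditional law $F_{\rm GUE}$ whatever the value of $\tau_1$; hence, by Fubini and the fact (stated after \eqref{deftauM}) that $M_1$ has density $f_{1/(4\sigma^4)}$,
\begin{equation*}
1-F^{(\sigma)}(s)\ \ge\ \P\bigl(M_1+\A_2(\tau_1)>s\bigr)\ =\ \E\bigl[1-F_{\rm GUE}(s-M_1)\bigr]\ =\ \int_\R\bigl(1-F_{\rm GUE}(s-m)\bigr)f_{1/(4\sigma^4)}(m)\,\d m .
\end{equation*}

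The next step is to localize this integral. By \eqref{eqGUEtail} and \eqref{eqGroenExpansionC} with $c=1/(4\sigma^4)$ (so that $\sqrt{\tfrac43c}=1/(\sqrt3\sigma^2)$), the integrand is, up to polynomial factors, $\exp(-\tfrac43\phi(m))$ with $\phi(m)=(s-m)^{3/2}+\tfrac{1}{\sqrt3\sigma^2}m^{3/2}$. A short calculation shows $\phi$ is convex on $[0,s]$ with interior minimum at $m^*=\tfrac{3\sigma^4}{1+3\sigma^4}s$, where $\phi(m^*)=s^{3/2}/\sqrt{1+3\sigma^4}$ — exactly the exponent in the claim — and $\phi''(m^*)=(1+3\sigma^4)^{3/2}/(4\sigma^4\sqrt s)$. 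I would then keep only the contribution of $I=[m^*-\Delta,\,m^*+\Delta]$ with $\Delta=2\sigma^2s^{1/4}/(1+3\sigma^4)^{3/4}$, i.e.\ $\phi''(m^*)\Delta^2=1$. The conditions $\Delta\ll s-m^*=\tfrac{s}{1+3\sigma^4}$ and $\Delta\ll m^*$, together with $s-m,m\gg1$ throughout $I$, all hold precisely when $s\gg\max\{\sigma^{-4},\sigma^4\}$, which is exactly what is needed for the asymptotics \eqref{eqGUEtail} and \eqref{eqGroenExpansionC} to apply on $I$ with multiplicative error factors bounded below by, say, $\tfrac12$.

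Finally I would assemble the estimates. On $I$ both $s-m$ and $m$ lie within a bounded factor of their values at $m^*$, so $\phi''$ is at most an absolute multiple of $\phi''(m^*)$ there, and Taylor's formula gives $\phi(m)\le\phi(m^*)+\O(1)$ on $I$, hence $e^{-\frac43\phi(m)}\ge e^{-\O(1)}e^{-\frac43\phi(m^*)}$; likewise the polynomial prefactors on $I$ are comparable to their values at $m^*$, namely $\bigl(1-F_{\rm GUE}(s-m)\bigr)f_{1/(4\sigma^4)}(m)\gtrsim \frac{(1+3\sigma^4)^{3/2}}{s^{3/2}}\cdot\frac{\sqrt{m^*}}{\sigma^2}\,e^{-\frac43\phi(m)}\asymp\frac{1+3\sigma^4}{s}\,e^{-\frac43\phi(m)}$. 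Integrating over $I$, which has length $\asymp\Delta$, yields
\begin{equation*}
1-F^{(\sigma)}(s)\ \gtrsim\ \frac{1+3\sigma^4}{s}\cdot\frac{\sigma^2s^{1/4}}{(1+3\sigma^4)^{3/4}}\,e^{-\frac43\frac{s^{3/2}}{\sqrt{1+3\sigma^4}}}\ =\ \mathrm{const}\cdot\sigma^2(1+3\sigma^4)^{1/4}s^{-3/4}e^{-\frac43\frac{1}{\sqrt{1+3\sigma^4}}s^{3/2}},
\end{equation*}
which is the assertion.

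I expect the main difficulty to be bookkeeping rather than conceptual: one must track the $\sigma$-dependence consistently through every estimate — it is the window width $\Delta$ that manufactures the prefactor $\sigma^2(1+3\sigma^4)^{1/4}$, and getting this factor right is the whole point of the proposition — verify uniformly on $I$ that the multiplicative errors in \eqref{eqGUEtail} and \eqref{eqGroenExpansionC} do not spoil the bound, and make the conditioning step rigorous. The one genuinely delicate point there is the joint measurability of $\A_2(\tau_1)$ together with the identity $\mathrm{Law}\bigl(\A_2(\tau_1)\mid B\bigr)=F_{\rm GUE}$, which rests on the stationarity of $\A_2$, its independence from $B$, the continuity of its sample paths, and the a.s.\ uniqueness of the argmax $\tau_1$.
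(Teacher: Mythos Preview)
Your proposal is correct and follows essentially the same approach as the paper: both evaluate at the random time $\tau_1$ to reduce to $\E[1-F_{\rm GUE}(s-M_1)]$, then perform a Laplace-type analysis of the resulting integral at the interior critical point $m^*=\tfrac{3\sigma^4}{1+3\sigma^4}s$. The only cosmetic difference is that the paper first changes variables $m=s\mu$ and expands $g(\mu)$ around $\mu_0=3\sigma^4/(1+3\sigma^4)$, whereas you work directly in $m$ with the explicit window $\Delta=2\sigma^2 s^{1/4}/(1+3\sigma^4)^{3/4}$; your bookkeeping of the $\sigma$-dependent prefactor and your justification of the conditioning step are, if anything, slightly more explicit than the paper's.
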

\begin{proof}
The upper tail of the $F^{(\sigma)}$ distribution can be rewritten as
\begin{equation}\label{Fsigmaconditional}
1-F^{(\sigma)}(s)=\E\left(\P\left(\sup_{t\in\R}\left(\sqrt2\sigma B(t)+\A_2(t)-t^2\right)>s\,\Big|\,\tau_1\right)\right)
\end{equation}
by conditioning on the value of the time $\tau_1$.
The conditional probability on the right-hand side of \eqref{Fsigmaconditional} can be lower bounded by replacing the supremum of $\sqrt2\sigma B(t)+\A_2(t)-t^2$ with its value at $t=\tau_1$ to get
\begin{equation}\label{Fsigmalower}\begin{aligned}
1-F^{(\sigma)}&\ge\E\left(\P\left(\sqrt2\sigma B(\tau_1)+\A_2(\tau_1)-\tau_1^2>s\,\big|\,\tau_1\right)\right)\\
&=\E\left(\P\left(\A_2(\tau_1)>s-M_1\,|\,\tau_1\right)\right)\\
&=\E\left(1-F_{\text{GUE}}(s-M_1)\right)
\end{aligned}\end{equation}
where the first equality follows by the definition \eqref{deftauM} of $M_1$ and by rearranging.
In the second equality, we used that the Airy$_2$ process is stationary with GUE Tracy--Widom distribution at any position independently of $\tau_1$.

By using Proposition~\ref{prop:Groeneboom} about the asymptotic of the density of $M_1$ and the tail decay of the GUE Tracy--Widom distribution, see \eqref{eqGUEtail}, one gets that the right-hand side of \eqref{Fsigmalower} can be lower bounded by
\begin{equation}\label{1-FGUEintegral}\begin{aligned}
&\E\left(1-F_{\text{GUE}}(s-M_1)\right)\\
&\qquad\ge\frac43\sqrt{\frac1{4\sigma^4}}\frac1{16\pi}\int_0^s\d m\,e^{-\frac43\sqrt{\frac43\frac1{4\sigma^4}}m^{3/2}}
e^{-\frac43(s-m)^{3/2}}\frac{\sqrt{m}}{(s-m)^{3/2}}\left(1+R(s,m))\right)\\
&\qquad=\frac1{24\pi\sigma^2}\int_0^1\d \mu\,e^{-\frac43\sqrt{\frac43\frac1{4\sigma^4}}s^{3/2}\mu^{3/2}}
e^{-\frac43s^{3/2}(1-\mu)^{3/2}}\frac{\sqrt{\mu}}{(1-\mu)^{3/2}}\left(1+R(s,s \mu)\right)
\end{aligned}\end{equation}
with the change of variables $m=s\mu$. Here $R(s,m)=\O((s-m)^{-3/2},m^{-1/4})$ and $R(s,s\mu)=\O(s^{-3/2}(1-\mu)^{-3/2},s^{-1/4}\mu^{-1/4})$ is meant as $s\to\infty$ with $\mu\in (0,1)$.

Let $g(\mu)=-\frac43\sqrt{\frac43\frac1{4\sigma^4}}\mu^{3/2}-\frac43(1-\mu)^{3/2}$. One can compute that
\begin{equation}\label{mu0}
g'(\mu)=0 \textrm{ for }\mu=\mu_0=\frac{3\sigma^4}{1+3\sigma^4}
\end{equation}
as well as
\begin{equation}
g''(\mu)<0\text{ for all }\mu\in [0,1].
\end{equation}
In particular, Taylor expansion gives $g(\mu)=g(\mu_0)-\alpha (\mu-\mu_0)^2+\O((\mu-\mu_0)^3)$ with $g(\mu_0)=-\frac43\frac1{\sqrt{1+3\sigma^4}}$ and $\alpha=(1+3\sigma^4)^{3/2}/(6\sigma^4)$.

The main contribution of the integral on the right-hand side of \eqref{1-FGUEintegral} comes from the regime $|\mu-\mu_0|\sim\frac1{s^{3/4}\sqrt{g''(\mu_0)}}=\frac{\sqrt3\sigma^2}{(1+3\sigma^4)^{3/4}}\frac1{s^{3/4}}$.
We assume that $s\gg \max\{\sigma^4,\sigma^{-4}\}$ which can be written equivalently as $s^{-1/4}\ll\sigma\ll s^{1/4}$.
Next we show that the error terms in $R$ in \eqref{1-FGUEintegral} are small in the regime of the main contribution.
If $\sigma\to0$ as $s\to\infty$, then $\mu_0\sim3\sigma^4$ by \eqref{mu0} and for the regime which we consider $|\mu-\mu_0|\sim\frac{\sqrt3\sigma^2}{s^{3/4}}=o(\sigma^4)$ holds as long as $\sigma\gg s^{-1/4}$.
Hence $\mu s\to\infty$ and $R\to0$ in the regime of the main contribution.
If $\sigma\to\infty$ with $s\to\infty$, then $1-\mu_0\sim\frac1{3\sigma^4}$ and the width of the regime considered is $\sim\frac1{3^{3/4}\sigma^3s^{3/4}}=o(\sigma^{-4})$ provided that $\sigma\ll s^{1/4}$.
Furthermore, $(1-\mu)s\to\infty$ and $R\to0$ in the regime which gives the main contribution.
The error $R$ also goes to $0$ in the regime above if $\sigma$ remains bounded away from $0$ and infinity.

In the regime of $\mu$ that we consider, the higher order terms of the expansion are controlled by the quadratic term for all $s\gg \min\{\sigma^4,\sigma^{-4}\}$.
Thus the quadratic approximation leads to the lower bound
\begin{equation}\begin{aligned}
\E\left(1-F_{\text{GUE}}(s-M_1)\right)&\ge\frac1{24\sqrt\pi\sigma^2}\frac{\sqrt{\mu_0}}{(1-\mu_0)^{3/2}\sqrt\alpha s^{3/4}} e^{-g(\mu_0) s^{3/2}}(1+\O(s^{-1/4}))\\
&=\frac{\sigma^2(1+3\sigma^4)^{1/4}}{4\sqrt{2\pi}}s^{-3/4}e^{-\frac43\frac1{\sqrt{1+3\sigma^4}}s^{3/2}}(1+\O(s^{-1/4})).
\end{aligned}\end{equation}
\end{proof}

\paragraph{Upper bound.}
This strategy for getting the upper bound is different. We noticed that the tail distribution of $\sup_{t\in\R}(\A_2(t)-(1-c)t^2)$ is, in the exponential scale, independent of $c$ provided that $c<1$.
This implies that the tail distribution will be determined mostly by the tail of $M_c=\sup_{t\in\R}(\sqrt{2}\sigma B(t)-c t^2)$.
The proof of the upper bound goes by conditioning on the value of $M_c$ and bounding $\sqrt{2}\sigma B(t)-c t^2$ by $M_c$ from above.
\begin{proposition}\label{prop:Fsigmaupper}
For all $\sigma>0$, there is a constant $C_2$ independent of $s$ such that
\begin{equation}\label{Fsigmaupper}
1-F^{(\sigma)}(s) \leq C_2 \sigma^6(1+3\sigma^4)^{-2} s^{3/4}\ln(s)\,e^{-\frac43\frac1{\sqrt{1+3\sigma^4}}s^{3/2}}
\end{equation}
holds for $s\gg \max\{\sigma^{-4},\sigma^4\}$.
\end{proposition}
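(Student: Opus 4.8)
Here is the plan. Fix $c\in(0,1)$, to be calibrated at the end. The point of departure is the pointwise bound
\[
\sqrt2\sigma B(t)-t^2=\bigl(\sqrt2\sigma B(t)-ct^2\bigr)-(1-c)t^2\le M_c-(1-c)t^2,
\]
valid for all $t\in\R$, which gives $\sup_{t\in\R}\{\sqrt2\sigma B(t)+\A_2(t)-t^2\}\le M_c+Y_c$ with $Y_c:=\sup_{t\in\R}(\A_2(t)-(1-c)t^2)$ independent of $M_c$. Conditioning on $M_c$, whose law has the density $f_{c/(4\sigma^4)}$ of Proposition~\ref{prop:Groeneboom}, one would write
\[
1-F^{(\sigma)}(s)\le\int_\R\P(Y_c>s-m)\,f_{c/(4\sigma^4)}(m)\,\d m
\]
and plug in Theorem~\ref{thm:Airyparabola}, $\P(Y_c>y)\le C\,\ln(y/(1-c))\,y^{-3/4}(1-c)^{-1/2}e^{-\frac43 y^{3/2}}$ for $y$ large, together with $f_{c/(4\sigma^4)}(m)=\tfrac{2\sqrt c}{3\sigma^2}\sqrt m\,e^{-\frac43\lambda m^{3/2}}(1+\O(m^{-1/4}))$ from Proposition~\ref{prop:Groeneboom}, where $\lambda=\lambda(c):=\sqrt{c/(3\sigma^4)}$. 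In the range where $m$ and $s-m$ are both large the integrand is thus a polynomial prefactor times $\exp(-\tfrac43[(s-m)^{3/2}+\lambda m^{3/2}])$, and $m\mapsto(s-m)^{3/2}+\lambda m^{3/2}$ on $[0,s]$ is strictly convex with minimum $s^{3/2}\phi_*$ at $m_*=s\mu_*$, where $\mu_*=1/(1+\lambda^2)$ and $\phi_*=\lambda/\sqrt{1+\lambda^2}\uparrow1/\sqrt{1+3\sigma^4}$ as $c\uparrow1$.

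The calibration of $c$ is the crux. I would take $c=c(s)=1-\epsilon$ with $\epsilon=\epsilon(s,\sigma)$ of order $(1+3\sigma^4)^{3/2}\sigma^{-4}s^{-3/2}$ (times a fixed small constant, which also keeps $\epsilon<1$ once $s\gg\max\{\sigma^4,\sigma^{-4}\}$). Then $\tfrac43 s^{3/2}\bigl(1/\sqrt{1+3\sigma^4}-\phi_*\bigr)=\O(1)$, so the target exponential $e^{-\frac43 s^{3/2}/\sqrt{1+3\sigma^4}}$ is lost only by a bounded factor; at the same time $(1-c)^{-1/2}=\epsilon^{-1/2}\asymp\sigma^2 s^{3/4}(1+3\sigma^4)^{-3/4}$ is exactly the source of the $s^{3/4}$ in the prefactor, while $\ln(y/(1-c))=\ln(y/\epsilon)=\O(\ln s)$ throughout the relevant range (using $\sigma\ll s^{1/4}$) is the source of the $\ln s$. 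The bulk contribution --- $m$ within a few widths $\asymp\epsilon^{-1/2}s^{-1/2}$ of $m_*$, where $m\asymp\sigma^4 s/(1+3\sigma^4)$ and $s-m\asymp s/(1+3\sigma^4)$ are indeed both large because $s\gg\max\{\sigma^4,\sigma^{-4}\}$ --- I would then evaluate by a routine Laplace/steepest-descent estimate (the exponent's second derivative at $m_*$ being of order $(1+3\sigma^4)^{3/2}\sigma^{-4}s^{-1/2}$); collecting the polynomial prefactors at $m_*$ should yield a bound of the form claimed in \eqref{Fsigmaupper}.

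It remains to discard the rest of the integral. Where $m$ is bounded, one uses $\P(Y_c>s-m)\le\P(Y_c>s-K)$ (Theorem~\ref{thm:Airyparabola}) and $\int f\le1$, obtaining an $e^{-\frac43(s-K)^{3/2}}$-scale term; where $s-m$ is bounded, one uses $\P(Y_c>s-m)\le1$ and $\int f=\P(M_c\ge s-K)$, which Proposition~\ref{prop:Groeneboom} bounds by an $e^{-\frac43\lambda(s-K)^{3/2}}$-scale term (this also covers the tail $m>s$, costing at most $\P(M_c\ge s)$); in both cases the exponent exceeds $s^{3/2}/\sqrt{1+3\sigma^4}$ by a margin growing faster than any power of $s$, since $1-1/\sqrt{1+3\sigma^4}\gtrsim\min(1,\sigma^4)$ and $\lambda-1/\sqrt{1+3\sigma^4}\gtrsim\bigl(\sigma^2(1+3\sigma^4)\bigr)^{-1}$ combine with $s\gg\max\{\sigma^4,\sigma^{-4}\}$. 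Finally, where $m$ and $s-m$ are both large but $|m-m_*|$ exceeds a few Gaussian widths, the integrand is $\lesssim(\text{polynomial})\,e^{-\frac43 s^{3/2}\phi(m/s)}$ with $\phi(m/s)-\phi_*$ bounded below by a Gaussian-tail amount, so the strict convexity of the exponent absorbs this part as well.

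The main obstacle I anticipate is combining the calibration $c\uparrow1$ with uniformity in $\sigma$ over the whole admissible range $s\gg\max\{\sigma^4,\sigma^{-4}\}$: the rate of $\epsilon\to0$ must be fast enough to keep the exponent defect $\O(1)$ but slow enough that $\epsilon^{-1/2}$ does not overwhelm $s^{3/4}$ (this trade-off is precisely why the prefactor is $s^{3/4}\ln s$ and not purely algebraic), and as $\sigma\to0$ (resp.\ $\sigma\to\infty$) the saddle $m_*/s$ runs to $0$ (resp.\ $1$), so the window around $m_*$ on which the expansions of $f_{c/(4\sigma^4)}$ and of $\P(Y_c>\,\cdot\,)$ apply shrinks and one must verify it still contains the relevant Gaussian widths --- which again is exactly what $s\gg\max\{\sigma^4,\sigma^{-4}\}$ is tailored to give. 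Tracking all the $\sigma$-dependent constants through the Laplace step then produces the stated $\sigma$-dependence of $C_2$.
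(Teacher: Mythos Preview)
Your proposal is correct and follows essentially the same approach as the paper: the same decomposition $\sqrt2\sigma B(t)-t^2\le M_c-(1-c)t^2$, the same conditioning on $M_c$ with density $f_{c/(4\sigma^4)}$, the same inputs (Theorem~\ref{thm:Airyparabola} and Proposition~\ref{prop:Groeneboom}), the same Laplace analysis at the saddle $\mu_*=3\sigma^4/(c+3\sigma^4)$, and the same calibration $1-c\asymp(1+3\sigma^4)^{3/2}\sigma^{-4}s^{-3/2}$. Your treatment of the endpoint regions (small $m$, small $s-m$, and $m>s$) is in fact more explicit than the paper's, which simply applies the Laplace approximation after substituting the asymptotic formulas.
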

\begin{proof}
To get an upper bound, one can write the event
\begin{multline}
\left\{\sup_{t\in\R}\left(\sqrt2\sigma B(t)+\A_2(t)-t^2\right)>s\right\}\\
=\left\{\exists t\in\R:\left(\sqrt2\sigma B(t)-ct^2\right)+\left(\A_2(t)-(1-c)t^2\right)>s\right\}
\end{multline}
for any $c\in(0,1)$.
Since the maximum of the first term on the right-hand side is $M_c$,
it holds for any $t\in\R$ that $\sqrt2\sigma B(t)-ct^2\le M_c$,
and one can bound the upper tail of $F^{(\sigma)}$ as
\begin{equation}\label{Fsigmaupperbound}\begin{aligned}
1-F^{(\sigma)}(s)&\le\P\left(\exists t\in\R:M_c+\left(\A_2(t)-(1-c)t^2\right)>s\right)\\
&=\E\left(\P\left(\sup_{t\in\R}\left(\A_2(t)-(1-c)t^2\right)>s-M_c\,\Big|\,M_c\right)\right)
\end{aligned}\end{equation}
where the last equality follows by conditioning and rearrangement.

Now the upper bound on the right-hand side of \eqref{Fsigmaupperbound} can be bounded by an integral using Proposition~\ref{prop:Groeneboom} about the density of $M_c$ and by Theorem~\ref{thm:Airyparabola}.
Hence we get
\begin{equation}\begin{aligned}
&1-F^{(\sigma)}(s)\\
&\quad\le C\int_0^s\d m\,e^{-\frac43\sqrt{\frac43\frac c{4\sigma^4}}m^{3/2}}e^{-\frac43(s-m)^{3/2}}
\frac{\sqrt{m}\ln((s-m)/(1-c))}{(s-m)^{3/4}\sqrt{1-c}} \left(1+\O(m^{-1/4})\right)\\
&\quad\leq C \int_0^1\d \mu\,e^{-\frac43\sqrt{\frac43\frac c{4\sigma^4}}\mu^{3/2}s^{3/2}}e^{-\frac43(1-\mu)^{3/2}s^{3/2}}\,
\frac{s^{3/4}\sqrt{\mu}\ln((s(1-\mu))/(1-c))}{(1-\mu)^{3/4}\sqrt{1-c}}\\
&\qquad\times\left(1+\O\Big(\frac{1}{(s\mu)^{1/4}}\Big)\right).
\end{aligned}\end{equation}
As for the lower bound, we need to have $s\gg \max\{\sigma^{-4},\sigma^4\}$ to apply the approximations.

Very similarly to \eqref{1-FGUEintegral}, one gets that the exponent is maximal for $\mu=\mu_0=\frac{3\sigma^4}{c+3\sigma^4}$. We also have
\begin{equation}
-\frac43\sqrt{\frac43\frac c{4\sigma^4}}\mu^{3/2}-\frac43(1-\mu)^{3/2}
=-\frac43\frac{\sqrt{c}}{\sqrt{c+3\sigma^4}}-\frac{(c+3\sigma^4)^{3/2}}{6\sigma^4\sqrt{c}} (\mu-\mu_0)^2+\O\left((\mu-\mu_0)^3\right).
\end{equation}
This gives
\begin{equation}\label{eq4.16}
1-F^{(\sigma)}(s)\le C'\frac{\sigma^4}{\sqrt{c+3\sigma^4}\sqrt{c}}
\frac{\ln(s/(1-c))}{\sqrt{1-c}}e^{-\frac43\frac{\sqrt{c}}{\sqrt{c+3\sigma^4}}s^{3/2}}(1+\O(s^{-1/4}))
\end{equation}
for some constant $C'$ which does not depend on $c$ and $\sigma$. Finally, since
\begin{equation}
\frac{\sqrt{c}}{\sqrt{c+3\sigma^4}}= \frac{1}{\sqrt{1+3\sigma^4}}-\frac{3\sigma^4}{2(1+3\sigma^4)^{3/2}}(1-c)+\O((1-c)^2),
\end{equation}
we choose $1-c=\tilde c s^{-3/2}$. With the choice $\tilde c=\frac14 (1+3\sigma^4)^{3/2}/\sigma^4$, together with \eqref{eq4.16} we obtain
\begin{equation}
1-F^{(\sigma)}(s)\le C'' \sigma^6(1+3\sigma^4)^{-2} s^{3/4}\ln(s)\,e^{-\frac43\frac{1}{\sqrt{1+3\sigma^4}}s^{3/2}}
\end{equation}
for some other constant $C''$ independent of $\sigma,s$.
\end{proof}

\section{Tail bounds for deterministic initial profile}\label{s:deterministic}
In this section we prove Theorem~\ref{thm:generalcurve} confirming the heuristics that the leading contribution for the right tail decay comes from the position where the function $h_0(t)-t^2$ is maximal.

\begin{proof}[Proof of Theorem~\ref{thm:generalcurve}]
Let $\tau\in\R$ be a time such that $\kappa(h_0)=\sup_{t\in\R} \{h_0(t)-t^2\}=h_0(\tau)-\tau^2$.
For the lower bound in \eqref{generalcurvebound} note that
\begin{equation}
\sup_{t\in\R}\left\{h_0(t)+\A_2(t)-t^2\right\}\ge h_0(\tau)+\A_2(\tau)-\tau^2=\kappa(h_0)+\A_2(\tau)
\end{equation}
by the definition of the time $\tau$. Hence
\begin{equation}
\P\left(\sup_{t\in\R}\left\{h_0(t)+\A_2(t)-t^2\right\}\geq s\right)
\geq\P\left(\kappa(h_0)+\A_2(\tau)\geq s\right)=1-F_{\rm GUE}(s-\kappa(h_0)).
\end{equation}
This inequality together with the asymptotic \eqref{eqGUEtail} leads to the lower bound in \eqref{generalcurvebound}.

Now we consider the upper bound.
The function $h_0(t)-t^2$ is bounded from above by $\kappa(h_0)$ for all times $t\in\R$ and it is bounded from above by $\kappa(h_0)-\frac\varepsilon2 t^2$ for $|t|>M$.
Therefore
\begin{equation}\label{eq2.4}\begin{aligned}
&\P\left(\sup_{t\in\R}\left\{h_0(t)+\A_2(t)-t^2\right\}\geq s\right)\\
&\leq \P\bigg(\sup_{|t|\leq M}\left\{\kappa(h_0)+\A_2(t)\right\}\geq s\bigg)+\P\bigg(\sup_{|t|>M}\left\{\kappa(h_0)+\A_2(t)-\frac\varepsilon2 t^2\right\}\geq s\bigg)\\
&\leq \P\bigg(\sup_{|t|\leq M}\A_2(t)\geq s-\kappa(h_0)\bigg)+\P\bigg(\sup_{t\in\R}\left\{\A_2(t)-\frac\varepsilon2 t^2\right\}\geq s-\kappa(h_0)\bigg).
\end{aligned}\end{equation}
The first term is bounded using Lemma~\ref{lemma:Airyfinite}.
The second term is bounded using Theorem~\ref{thm:Airyparabola}.
Altogether we get
\begin{multline}
\P\left(\sup_{t\in\R}\left\{h_0(t)+\A_2(t)-t^2\right\}\geq s\right)\\
\le C'\frac{2M}{(s-\kappa(h_0))^{1/4}}e^{-\frac43 (s-\kappa(h_0))^{3/2}}
+C\frac{\ln[2(s-\kappa(h_0))/\varepsilon]}{(s-\kappa(h_0))^{3/4}\sqrt{\varepsilon/2}}e^{-\frac43(s-\kappa(h_0))^{3/2}}.
\end{multline}
Since $\varepsilon$ is fixed, for large $s$ the second term is smaller than the first one, which completes the proof.
\end{proof}

%\bibliography{Biblio}
%\bibliographystyle{plain}

\end{document}